\documentclass[11pt]{article}
\setlength\oddsidemargin{2.5cm}
\addtolength\oddsidemargin{-1in}
\setlength\evensidemargin{2.5cm} \addtolength\evensidemargin{-1in}
\setlength\textwidth{8.5in}
\addtolength\textwidth{-5cm}
\setlength\textheight{11in}
\addtolength\textheight{-5cm}
\voffset=-1.0in \setlength\topmargin{1cm} \setlength\headheight{1cm}
\setlength\headsep{0.5cm}

\setlength\footskip{1.0cm}

\makeatletter
\def\@seccntformat#1{\csname the#1\endcsname.\hspace*{0.5em}}

\def\@maketitle{
  \newpage
  \null
  \vskip 2em%
  \begin{center}%
  \let \footnote \thanks
    {\LARGE \@title \par}%
    \vskip 1.5em%
    {\large
      \lineskip .5em%
      \begin{tabular}[t]{c}%
        \@author
      \end{tabular}\par}%
  \end{center}%
  \par
  \vskip 1.5em}
\makeatother

\usepackage{amsmath}
\usepackage{amsfonts}
\usepackage{amssymb}

\usepackage{url}
\urlstyle{rm} 

\usepackage{xspace}
\usepackage{amsbsy}%
\usepackage{algmacro} 
\usepackage{epsfig}%

\usepackage{theorem}
\begingroup \makeatletter
\gdef\th@plain{\normalfont\itshape
  \def\@begintheorem##1##2{%
        \item[\hskip\labelsep \theorem@headerfont ##1\ ##2.]}%
\def\@opargbegintheorem##1##2##3{%
   \item[\hskip\labelsep \theorem@headerfont ##1\ ##2\ (##3).]}}
\endgroup

\def\citet{\citep} 


\hyphenation{ap-prox-i-ma-tion ap-prox-i-mate Kalt-ofen Sprin-ger Len-stra 
com-mu-ta-tive poly-no-mi-al dis-crim-i-nant Buch-berger Berle-kamp meth-od 
al-go-rithm prob-a-bil-i-ty pol-y-no-mi-al Mas-sa-chu-setts
Ra-leigh}

\newtheorem{theorem}{Theorem}

\newtheorem{lemma}{Lemma}

\newtheorem{defination}{Definition}
\newtheorem{prob}{Problem}

\newcommand{\mon}{{m_d(x)}}
\newcommand{\bX}{\widetilde{X}}
\newcommand{\bL}{\widetilde{\Lambda}}
\newcommand{\bl}{\widetilde{\lambda}}

\newcommand{\diag}{{\text{\upshape diag}}}

\newcommand{\rank}{{\text{\upshape rank}}}
\newcommand{\trace}{{\text{\upshape Tr}}}

\newcommand{\Tr}{{\text{\upshape Tr}}}
\newcommand{\opt}{{\text{\upshape opt}}}
\newcommand{\err}{{\text{\upshape error}}}

\newcommand{\RR}{{\mathbb{R}}}
\newcommand{\QQ}{{\mathbb{Q}}}

\newcommand{\A}{{\mathcal {A}}}
\newcommand{\D}{{{\mathcal {T}}}}
\newcommand{\SSn}{\mathbb{S}_+^n}
\newcommand{\Sn}{\mathbb{S}^n}

\newcommand{\tildeU}{\widetilde{U}}

\newenvironment{proof}{\textit{Proof. }}{\hfill $\square$}
\title{The Minimum-Rank Gram Matrix Completion via Modified Fixed Point Continuation Method}

\author{
Yue Ma and Lihong Zhi
\\
Key Laboratory of Mathematics Mechanization \\[-0.5ex]
  Academy of Mathematics and System Sciences \\[-0.5ex]
  Academia Sinica, Beijing 100190, China
\\[-0.5ex]
 {\ttfamily  \{yma,lzhi\}@mmrc.iss.ac.cn}}

\begin{document}
\maketitle

\begin{abstract}
\noindent The problem of computing a representation for a real
polynomial as a sum of minimum number of squares of polynomials  can
be casted as finding  a symmetric positive semidefinite real  matrix
(Gram matrix) of minimum rank subject to linear equality
constraints.
 In this paper, we
propose  algorithms for solving the minimum-rank Gram matrix
completion problem, and show the convergence of these algorithms.
Our methods are based on the modified fixed point continuation (FPC)
method.  We  also use the  Barzilai-Borwein (BB) technique  and a
specific linear combination of two previous iterates to accelerate
the convergence of modified FPC algorithms. We demonstrate the
effectiveness of our algorithms for computing approximate and exact
rational sum of squares (SOS) decompositions of polynomials with
rational coefficients.
\end{abstract}


\section{Introduction}

Let  $x=[x_1, \ldots, x_s]$ and $f\in \RR[x]$, then $f$ is a
 sum of squares (SOS) in $\RR[x]$ if and only if  it can be written in the
form
\begin{equation}\label{sos}
f(x)=\mon^T \cdot W  \cdot \mon,
\end{equation}
in which $\mon$ is a column vector of monomials of degree less than
or equal to $d$ and $W$ is a real positive semidefinite matrix
\cite[Theorem 1]{PW98} (see also \cite{CLR95}). $W$ is also called a
{\it Gram matrix} for $f$. If $W$ has rational entries, then $f$ is
a sum of squares in $\QQ[x_1, \ldots, x_n]$.

\begin{prob}\label{problem1}
Let $f\in \QQ[x_1, \ldots, x_s]$ be a polynomial of the  degree $2
d$, compute a representation for it as a sum of minimum number of
squares of polynomials in $\QQ[x_1,\ldots,x_s]$.
\end{prob}

 The set of all matrices $W$ for which
(\ref{sos}) holds is an affine subspace of the set of symmetric
matrices. If the intersection of this affine subspace with the cone
of positive semidefinite (PSD) matrices is nonempty, then $f$ can be
written as a sum of squares. Since the components of $\mon$ are not
algebraically independent, $W$ is in general not unique. Problem
\ref{problem1} can be restated as finding a  Gram matrix with
minimum rank satisfying a given set of constraints:
\begin{align}\label{min_rank1}
\left.\begin{array}{cl}
  \min & ~\rank(W)   \\
  s.t. & ~f(x) = \mon^T \cdot W \cdot \mon\\
       & ~W\succeq 0, \, W^T = W
 \end{array}
\right\}
\end{align}

For $s=1$, Pourchet's main theorem \cite{Pourchet71}  implies  that
every positive definite univariate polynomial in $\QQ[x]$ is a sum
of five squares in $\QQ[x]$. Therefore, the  minimum rank of the
Gram matrix satisfying  (\ref{min_rank1}) is bounded by $5$ for
$s=1$. For $s>1$, Pfister's general theorem \cite{Pfister67} shows
that every positive definite polynomial in $\RR[x_1, \ldots, x_s]$
is a sum of $2^s$  squares of rational functions in $\RR(x_1,
\ldots, x_s)$. It is well known that there exist positive
semidefinite polynomials which cannot be written as sums of
polynomial squares. However, as shown in \cite{KLYZ09}, various
exceptional SOS problems in the literature by Motzkin, Delzell,
Reznick, Leep and Starr, the IMO'71 problem by A. Lax and P. Lax,
and the polynomial Vor2 in \cite{ELLS07}  can be written as sums of
less than 10 squares of polynomials after multiplying by suitable
polynomials. The advantage of computing a numerical Gram matrix with
small rank is that we can refine the approximately computed Gram
matrix to high accuracy  by structure preserved Gauss-Newton
iteration more efficiently in order to  recover the exact SOS
representation of $f$ \cite{ KLYZ08, KLYZ09,PePa07, PePa08}.

In general, the rank minimization is an intractable problem and is
in fact provably NP-hard due to the combinational nature of the
non-convex rank function \cite{CG84}.  In \cite{Fazel02,
FHB01,RFP08}, they showed that $\rank(W)$ can be replaced by the
nuclear norm of $W$, which is the best convex approximation of the
rank function over the unit ball of matrices.
 Expanding the right-hand side of the equality condition of
(\ref{min_rank1}), matching coefficients of the monomials, we obtain
a set of linear equations for the entries of $W$ which can be
written as ${\A}(W) = b$, where the action of the linear operator
$\A : \Sn \rightarrow\RR^p$ on $W$ is described by $\trace(A_i^{T}
W), i=1,\ldots,p$ for $A_1,\ldots,A_p\in\RR^{n \times n}$. We use
$\A^*: \RR^p \rightarrow \Sn$ to denote the adjoint operator of
$\A$. 
The rank minimization problem (\ref{min_rank1}) can be relaxed to
the nuclear norm minimization problem
\begin{align}\label{min_nuclear1}
\left.\begin{array}{cl}
            \min & ~\|W\|_* \\
            s.t. & ~\A(W) = b\\
                 & ~W\succeq0, \, W^T = W
\end{array}
\right\}
\end{align}
where the nuclear norm $\|W\|_* $ is defined as the sum of its
singular values. The constraint $\A(W)=b$ can also  be relaxed,
resulting in either the problem
\begin{align}\label{min_nuclear2}
\left.\begin{array}{cl}
            \min & ~\|W\|_* \\
            s.t. & ~\|\A(W)- b\|_2\leq\epsilon\\
                 & ~W\succeq0, \, W^T = W
\end{array}
\right\}
\end{align}
or its Lagrangian version
\begin{equation}\label{lagrangian}
\min_{W \in \SSn} ~\mu\|W\|_*+\frac{1}{2}\|\A(W)-b\|_2^2,
\end{equation}
where  $\SSn$ is the set of  symmetric positive semidefinite
matrices and $\mu>0$ is a parameter.

In  \cite{ BJT93,GJSW84,Monique97real, Monique00,Monique01-survy},
they studied how to determine whether
 partially specified positive semidefinite matrices can be completed
 to fully specified matrices satisfying certain prescribed
 properties. A number of recent work has also shown that the low-rank solution can be
recovered exactly via minimizing the nuclear norm under certain
conditions \cite{CR08,CT10,RFP08,RXH08}. Several algorithms based on
the interior point method have been proposed in  \cite{ BM03,BM05,
LV09,RFP08,RS05,SRJ05} for solving the semidefinite programming
problem derived from the rank minimization problem
(\ref{min_nuclear1}). Since most of these methods use second-order
information, the memory requirement for computing descent directions
quickly becomes too large as the problem size increases.  Recently,
several fast algorithms using only first-order information have been
developed in \cite{CCS08,GM09,JY09,MGC09,Ma10,TY09}. These
first-order methods, based on function values and gradient
evaluation, cannot yield as high accuracy as interior point methods,
but much larger problems can be solved since no second-order
information needs to be computed and stored.

Motivated by these exciting work, in this paper, we present   two
algorithms for solving the minimum-rank Gram matrix completion
problem (\ref{min_nuclear1}). Our algorithms are based on the
modified fixed point continuation method. By modifying the shrinkage
operator in FPC and using the Barzilai-Borwein technique to compute
explicit dynamically updated step sizes, we get an algorithm, called
modified fixed point continuation method with the  Barzilai-Borwein
technique (MFPC-BB). We  prove the convergence of our algorithm
under certain condition. Some accelerated gradient algorithms were
proposed in
\cite{BT09,JY09,NY83,Nesterov83,Nesterov05,TY09,Tseng08}. These
algorithms  rely on computing the next iterate based not only on the
previous one, but also on two or more previously computed iterates.
These accelerated gradient methods have an attractive convergence
rate of $O(1/k^2)$, where $k$ is the iteration counter. We
incorporate
 this accelerating technique in the  MFPC-BB algorithm to get  
an accelerated fixed point continuation algorithm with the
Barzilai-Borwein technique (AFPC-BB), which  shares the improved
rate $O(1/k^2)$ of the optimal gradient method.

We also notice  that algorithms in  the literature  mostly focus on
recovering a  randomly generated large-scale matrix from incomplete
samples of its entries. Although it has been pointed out briefly  in
\cite{TY09} that these algorithms can be adapted easily to solve the
regularized semidefinite linear least squares problem
(\ref{lagrangian}), it is interesting for us to investigate how to
use these newly developed techniques to compute approximate and
exact rational sum of squares (SOS) decompositions of polynomials
with rational coefficients.

\notation  Let $\Sn\subset\RR^{n\times n}$ denote the space of
symmetric $n\times n$ matrices.  The inner product between two
elements $X, Y\in\Sn$ is denoted by $\langle X,
Y\rangle=\trace(X^TY)$. The Frobenius norm of a matrix $X$ is
denoted by $\|X\|_F$, the nuclear norm by $\|X\|_*$ and the operator
norm (or spectral norm) by $\| X\|_2$.

 The rest of the paper is
organized as follows. In Section 2, we derive the modified fixed
point iterative algorithm for the minimum-rank Gram matrix
completion problem. In Section 3, we establish the convergence
result for the iterations given in Section 2 and prove that it
converges to the optimal solution of the regularized linear least
squares problem (\ref{lagrangian}). In Section 4, we introduce
two techniques to accelerate the convergence of our algorithm 
and present MFPC-BB and AFPC-BB algorithms for solving problem
(\ref{lagrangian}). We demonstrate the performance and effectiveness
of our algorithms through numerical examples for computing
approximate and exact rational sum of squares decompositions of
polynomials with rational coefficients in Section 5.

\section{Modified fixed point iterative algorithm}



Let  $f: \RR^{n_1\times n_2}\rightarrow \RR$  be a convex function,
the subdifferential of $f$ at $X^*\in\RR^{n_1\times n_2}$ denoted by
$\partial f$ is the compact convex set defined by
\begin{align*}
    \partial f(X^*):=\{Z\in\RR^{n_1\times n_2}: f(Y)\geq f(X^*)+\langle Z,Y-X^*\rangle, \forall
~Y\in\RR^{n_1\times n_2}\}.
\end{align*}

Following  discussions in  \cite[Theorem 3.1]{Lewis96} and
\cite{Watson92}, we derive the expression of the subdifferential of
the nuclear norm at a symmetric matrix.

\begin{theorem}\label{sub-nuclearnorm}
Let $W\in\Sn$, then
\begin{gather*}
    \partial\|W\|_*=\{Q^{(1)}Q^{(1)T}-Q^{(2)}Q^{(2)T}+Z: Q^{(i)T} Z=0, i=1,2,
    ~\text{and}~
    \|Z\|_2\leq1
    \},
\end{gather*}
where $Q^{(1)}$ and $Q^{(2)}$ are orthogonal eigenvectors associated
with the positive and negative eigenvalues of $W$ respectively.
\end{theorem}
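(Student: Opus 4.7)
The plan is to derive the formula from the standard duality characterization of the subdifferential: since the nuclear norm and the operator norm are dual norms on $\Sn$, one has $M \in \partial\|W\|_*$ if and only if $\|M\|_2 \leq 1$ and $\langle M, W\rangle = \|W\|_*$. The ambient space is implicitly $\Sn$, so the elements $M$ are symmetric. I would first diagonalize $W = Q\Lambda Q^T$ using a full orthogonal matrix $Q = [Q^{(1)} \mid Q^{(2)} \mid Q^{(0)}]$, where the three column blocks collect the eigenvectors for the positive, negative, and zero eigenvalues respectively, with corresponding diagonal eigenvalue blocks $\Lambda^{(1)} \succ 0$, $\Lambda^{(2)} \prec 0$, and the zero block. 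In this basis, $W = Q \, \diag(\Lambda^{(1)}, \Lambda^{(2)}, 0) \, Q^T$ and $\|W\|_* = \trace(\Lambda^{(1)}) - \trace(\Lambda^{(2)})$.

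For the inclusion $\supseteq$, I would take any $M = Q^{(1)}Q^{(1)T} - Q^{(2)}Q^{(2)T} + Z$ satisfying $Q^{(i)T}Z = 0$ for $i=1,2$ and $\|Z\|_2 \leq 1$. Symmetry of $Z$ then yields $ZQ^{(i)} = 0$ as well, so in the $Q$-basis $M$ is block-diagonal with blocks $I$, $-I$, and a symmetric $\hat{Z}$ of operator norm at most $1$, giving $\|M\|_2 \leq 1$ immediately. A direct trace computation then produces $\langle M, W\rangle = \trace(\Lambda^{(1)}) - \trace(\Lambda^{(2)}) = \|W\|_*$, confirming $M \in \partial\|W\|_*$.

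For the converse $\subseteq$, given $M \in \partial\|W\|_*$, I would write $M = Q \hat{M} Q^T$ in the same block structure, with blocks $\hat{M}_{ij}$. The inner-product equality becomes $\trace(\hat{M}_{11}\Lambda^{(1)}) + \trace(\hat{M}_{22}\Lambda^{(2)}) = \trace(\Lambda^{(1)}) - \trace(\Lambda^{(2)})$. Combined with the eigenvalue bounds coming from $\|\hat{M}\|_2 \leq 1$ and the strict positivity, resp.\ negativity, of the diagonal entries of $\Lambda^{(1)}$ and $\Lambda^{(2)}$, an extremality argument forces $\hat{M}_{11} = I$ and $\hat{M}_{22} = -I$. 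Then the standard observation that any symmetric contraction whose diagonal block equals $\pm I$ must have the adjacent off-diagonal blocks equal to zero reduces $\hat{M}$ to $\diag(I, -I, \hat{M}_{33})$ with $\|\hat{M}_{33}\|_2 \leq 1$; hence $M = Q^{(1)}Q^{(1)T} - Q^{(2)}Q^{(2)T} + Z$ with $Z := Q^{(0)}\hat{M}_{33}Q^{(0)T}$, exactly the claimed form.

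The main obstacle is the extremality argument that pins the diagonal blocks of $\hat{M}$ to $\pm I$ and kills the adjacent off-diagonal blocks; both require using the equality case in spectral inequalities rather than just the bound $\|\hat{M}\|_2 \le 1$. An alternative route, more in the spirit of \cite{Lewis96,Watson92}, is to apply the general formula $\partial\|X\|_* = \{UV^T + N : U^T N = 0,\, NV = 0,\, \|N\|_2 \leq 1\}$ to the SVD of $W$ read off from its spectral decomposition by setting $U = [Q^{(1)}, Q^{(2)}]$, $\Sigma = \diag(\Lambda^{(1)}, -\Lambda^{(2)})$, and $V = [Q^{(1)}, -Q^{(2)}]$: this immediately gives $UV^T = Q^{(1)}Q^{(1)T} - Q^{(2)}Q^{(2)T}$, and once we restrict to $N \in \Sn$ the two one-sided orthogonality conditions collapse to the single requirement $Q^{(i)T}Z = 0$ stated in the theorem.
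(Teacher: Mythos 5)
Your first route is correct but genuinely different from the paper's. The paper cites Lewis's theorem on subdifferentials of spectral (Hermitian) functions as a black box: it orders the eigenvalues, invokes \cite[Theorem 3.1]{Lewis96} to write any $Y\in\partial\|W\|_*$ as $Q\,\diag(d)\,Q^T$ with $d\in\partial\|\lambda\|_1$, and then reads off the block form. Your first argument instead works from first principles via the dual-norm characterization $M\in\partial\|W\|_*\Leftrightarrow\|M\|_2\le1,\ \langle M,W\rangle=\|W\|_*$, reducing everything to a $3\times3$ block computation in the eigenbasis; the two nontrivial steps (the trace equality pinning the diagonal blocks to $\pm I$, and the ``contraction with saturated diagonal block forces zero off-diagonal blocks'' fact, which follows from $I\mp\hat M\succeq0$ having a zero diagonal block) are both standard and correctly identified. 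This buys you a self-contained, elementary proof at the cost of a somewhat longer argument; the paper's proof is shorter but outsources the core step. Your alternative route at the end is closest to the paper's in spirit, though note the paper uses the eigenvalue (Hermitian) version of Lewis's result rather than the SVD version you quote; your remark that one can simply ``restrict to $N\in\Sn$'' is in fact legitimate because $\partial\|W\|_*$ in the full matrix space is closed under transposition, so intersecting with $\Sn$ coincides with orthogonally projecting onto $\Sn$, which is the general rule for subdifferentials of restrictions to subspaces --- it is worth making that justification explicit.
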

\begin{proof}
Suppose that the eigenvalues of a symmetric matrix $W$ can be
ordered as
$\lambda_1\geq\cdots\geq\lambda_t>0>\lambda_{t+1}\geq\cdots\geq\lambda_s$,
$\lambda_{s+1}=\cdots=\lambda_n=0$. Let $W=Q\Lambda Q^T$ be a Schur
decomposition of $W$, where $Q\in\RR^{n\times n}$ is an orthogonal
matrix and $\Lambda=\diag(\lambda_1,\ldots,\lambda_n)$. These
matrices can be partitioned as
\begin{gather}\label{part}
    Q = \left(Q^{(1)},Q^{(2)},Q^{(3)}\right),\quad
    \Lambda=\left(
                   \begin{array}{ccc}
                     \Lambda^{(1)} & 0        &0      \\
                     0         & \Lambda^{(2)}&0      \\
                     0         & 0        &\Lambda^{(3)} \\
                   \end{array}
                 \right),
\end{gather}
with $Q^{(1)}, Q^{(2)}, Q^{(3)}$ having $t, s-t, n-s$ columns and
being associated with
$\Lambda^{(1)}=\diag(\lambda_1,\ldots,\lambda_{t})$,
$\Lambda^{(2)}=\diag(\lambda_{t+1},\ldots,\lambda_s)$, and
$\Lambda^{(3)}=\diag(\lambda_{s+1},\ldots,\lambda_n)$, respectively.

Let $\lambda=(\lambda_1,\ldots, \lambda_n)^T$ and recall that
\begin{align*}
    \partial\|\lambda\|_1=\{y\in\RR^n: y_i=1, i=1,\ldots,t; ~y_j=-1, j=t+1,\ldots,s;~|y_k|<1,
    k=s+1,\ldots,n\}.
\end{align*}

Let $Y\in\partial\|W\|_*$, by \cite[Theorem 3.1]{Lewis96}, we have
\begin{align*}
Y=Q\,\diag(d)\,Q^T,
\end{align*}
where $d\in\partial\|\lambda\|_1$. Therefore
\begin{align*}
    Y=Q^{(1)}Q^{(1)T}-Q^{(2)}Q^{(2)T}+Q^{(3)}DQ^{(3)T},
\end{align*}
where $D$ is an $(n-s)\times(n-s)$ diagonal matrix with diagonal
elements  less than $1$ in modulus.

Let $Z=Q^{(3)}DQ^{(3)T}$, we have $Q^{(i)T} Z=0, i=1,2$. Let
$\sigma_1(\cdot)$ denote the largest singular value of a given
matrix, then we have
\begin{align*}
    \|Z\|_2  = Q^{(3)}DQ^{(3)T}\leq  \sigma_1(D)< 1,
\end{align*}
which completes the proof.
\end{proof}

The optimality condition in  \cite[Theorem 2]{MGC09} can be
generalized to the optimality condition for the constrained convex
optimization problem (\ref{lagrangian}).

\begin{theorem}\label{optimal-condition}
Let $f: \Sn \rightarrow \RR$ be a proper convex function, i.e. $f <
+\infty$ for at least one point and $f
> -\infty$ for every point in its domain.
Then $W^*$ is an optimal solution to the problem
\begin{equation}\label{con1}
\min_{W \in \SSn} ~f(W)
\end{equation}
if and only if $W^*\in\SSn$, and there exists a matrix $U\in\partial
f(W^*)$ such that
\begin{equation}\label{con2}
~\langle U, V-W^*\rangle\geq0, ~{\text{for all}}~ V\in\SSn.
\end{equation}
\end{theorem}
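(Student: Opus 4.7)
The plan is to prove the two directions separately. Sufficiency is immediate from the subgradient inequality; the substance of the theorem lies in the necessity direction, which I would reduce to unconstrained convex optimization using the indicator function of $\SSn$ and then apply the Moreau--Rockafellar sum rule.

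For sufficiency, assume $W^* \in \SSn$ and that some $U \in \partial f(W^*)$ satisfies \eqref{con2}. The definition of $\partial f$ gives $f(V) \geq f(W^*) + \langle U, V - W^* \rangle$ for every $V \in \Sn$, and on $\SSn$ the inner-product term is nonnegative by \eqref{con2}, so $f(V) \geq f(W^*)$. Hence $W^*$ is optimal.

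For necessity, let $\iota_{\SSn}$ denote the indicator function of the closed convex cone $\SSn$ (zero on $\SSn$ and $+\infty$ elsewhere), and set $g := f + \iota_{\SSn}$. Then \eqref{con1} is equivalent to unconstrained minimization of $g$ over $\Sn$, so $W^*$ is optimal iff $0 \in \partial g(W^*)$. I would then invoke the sum rule $\partial g(W^*) = \partial f(W^*) + \partial \iota_{\SSn}(W^*)$, which is valid because $f$ is proper convex and its effective domain meets the relative interior of $\SSn$. Since the subdifferential of the indicator of a convex set is its normal cone, $\partial \iota_{\SSn}(W^*) = N_{\SSn}(W^*) = \{Y \in \Sn : \langle Y, V - W^* \rangle \leq 0 \text{ for all } V \in \SSn\}$, there must exist $U \in \partial f(W^*)$ and $Y \in N_{\SSn}(W^*)$ with $U + Y = 0$. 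This gives $\langle U, V - W^* \rangle = -\langle Y, V - W^* \rangle \geq 0$ on $\SSn$, which is exactly \eqref{con2}.

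The principal obstacle is the sum-rule step, whose applicability rests on a constraint qualification relating the effective domain of $f$ to $\SSn$. If one prefers to avoid that abstract machinery, a direct alternative is to exploit feasible directions: for any $V \in \SSn$ and $t \in (0,1]$, convexity of $\SSn$ places $(1-t)W^* + tV$ in $\SSn$, so optimality together with convexity of $f$ yields the directional-derivative inequality $f'(W^*; V - W^*) \geq 0$; combining this with the identity $f'(W^*; D) = \max_{U \in \partial f(W^*)} \langle U, D \rangle$ on the compact convex set $\partial f(W^*)$ and a standard separation argument promotes the per-direction subgradients into a single $U \in \partial f(W^*)$ that satisfies \eqref{con2} uniformly in $V$.
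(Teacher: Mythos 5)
Your proof is correct, and its sufficiency half coincides with the paper's: the subgradient inequality plus (\ref{con2}) gives $f(V)\geq f(W^*)$ on $\SSn$. For necessity, though, you take a genuinely different route. The paper argues by contradiction: it fixes a subgradient $U$ and a point $V\in\SSn$ with $\langle U,V-W^*\rangle<0$, applies Rockafellar's formula (Theorem 23.4) for the one-sided directional derivative of $f$ at $W^*$ along the segment joining $V$ and $W^*$, and concludes that $f$ must drop slightly when moving from $W^*$ toward $V$, contradicting optimality. You instead pass to $g=f+\iota_{\SSn}$, characterize optimality by $0\in\partial g(W^*)$, and split the subdifferential with the Moreau--Rockafellar sum rule, reading $\partial\iota_{\SSn}(W^*)$ as the normal cone $N_{\SSn}(W^*)$; the normal-cone element $Y=-U$ then yields (\ref{con2}) immediately. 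What your route buys is that the delicate quantifier structure is handled automatically: the theorem asserts the existence of \emph{one} subgradient that works for \emph{all} $V$ (and indeed not every $U\in\partial f(W^*)$ need satisfy (\ref{con2}), as a one-variable example like minimizing $|x|$ over $x\geq 0$ shows), and the sum rule produces that single $U$ in one stroke, whereas a directional-derivative argument must still convert ``for each feasible direction some subgradient is nonnegative'' into a uniform $U$ -- exactly the separation/minimax step you flag in your alternative sketch, which is in effect the careful completion of the paper's strategy. The only hypothesis your main route needs beyond the paper's is the constraint qualification for the sum rule, and that is automatic here: $f$ is real-valued on all of $\Sn$, hence continuous in this finite-dimensional setting, so $\partial(f+\iota_{\SSn})(W^*)=\partial f(W^*)+N_{\SSn}(W^*)$ holds without further argument.
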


\begin{proof}
Suppose $U\in\partial f(W^*)$ and satisfies the inequality condition
(\ref{con2}), hence  
\begin{align*}
    f(V)\geq f(W^*)+\langle U,V-W^*\rangle, \quad\forall~V\in\SSn,
\end{align*}
we have $f(V)\geq f(W^*)$, for all $V\in\SSn$. This shows that $W^*$
is an optimal solution of the problem  (\ref{con1}).

Conversely, suppose $W^*$ is the optimal solution of the problem
(\ref{con1}), and (\ref{con2}) does not hold, i.e., there exists
$U\in\partial f(W^*)$, such that
\begin{align}\label{con3}
    \exists~V\in\SSn,\quad s.t. ~\langle U, V-W^*\rangle<0.
\end{align}
Consider $Z(t)=tW^*+(1-t)V$, where $t\in[0,1]$ is a parameter. Since
$Z(t)$ is on the line segment between $W^*$ and $V$, and $\SSn$ is a
convex set, $Z(t)\in\SSn, \forall ~t\in[0,1]$. By \cite[Theorem
23.4]{Rockafellar72}, the one-sided directional derivative of $f$ at
$Z(1)$ with respect to the vector $W^*-V$ satisfies the following
equation
\begin{align*}
    f^\prime(Z(t);W^*-V)|_{t=1}=f^\prime(W^*;W^*-V)=\sup\{\langle W,W^*-V\rangle: W\in\partial
    f(W^*)\}.
\end{align*}
According to  (\ref{con3}),  we have
\begin{align*}
    f^\prime(Z(t);W^*-V)|_{t=1}\geq \langle U, W^*-V\rangle > 0.
\end{align*}
Therefore,  for a small value $\epsilon>0$, we have
$f(Z(1-\epsilon))< f(W^*)$, which is contradict  to the fact that
$W^*$ is optimal to the problem (\ref{con1}).
\end{proof}

Based on  above theorems, we can introduce a thresholding operator
and extend the fixed point iterative scheme for solving
(\ref{lagrangian}).

\begin{defination}
Suppose $W=Q\Lambda Q^T$ is a  Schur decomposition of  a  matrix $W
\in\Sn$, where $\Lambda=\diag(\lambda_1,\ldots,\lambda_n)$ and $Q$
is a real orthogonal matrix. For any $\nu\geq0$, the matrix
thresholding operator $\D_{\nu}(\cdot)$ is defined as
\begin{align*}
    \D_{\nu}(W):=Q \D_{\nu}(\Lambda) Q^T,\quad
    \D_{\nu}(\Lambda)=\diag(\{\lambda_i-\nu\}_+),
\end{align*}
where $t_+=\max(0,t)$.
\end{defination}


We should point out   that the idea of using the eigenvalue
decomposition of $Y^k$ has also  appeared    in \cite[Remark
3]{TY09}. However, to our best knowledge, there exists no
convergence analysis about the eigenvalue thresholding operator in
the literature.

 Let  $\mu$ and {$\tau$} be positive real numbers and
$X^0$ be an initial starting  matrix. For $k=0,1,2,\cdots$,   we
compute
\begin{align}\label{iter}
\left\{
\begin{array}{ccl}
            Y^k &=& X^k-\tau\A^*(\A(X^k)-b), \\
            X^{k+1} &=& \D_{\tau\mu}(Y^k),
 \end{array}\right.
\end{align}
until a stopping criterion is reached. 

\begin{theorem}\label{eqivalence}
Suppose a matrix $W^*\in \SSn$ satisfies
\begin{enumerate}
  \item $\|\A(W^*)-b\|_2<\mu/n$ for a small positive number $\mu$.
  \item $W^*=\D_{\tau\mu}(h(W^*))$, where $h(\cdot)=I(\cdot)-\tau
\A^*(\A(\cdot)-b)$ and $I(\cdot)$ is an identity operator.
\end{enumerate}
Then $W^*$ is the unique optimal solution of the problem
(\ref{lagrangian}).
\end{theorem}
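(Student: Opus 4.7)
The plan is to apply the optimality condition of Theorem \ref{optimal-condition} to $f(W) := \mu\|W\|_* + \tfrac{1}{2}\|\A(W)-b\|_2^2$, which reduces the task to producing a single $U \in \partial f(W^*)$ with $\langle U, V - W^*\rangle \geq 0$ for all $V \in \SSn$. Since the quadratic term is smooth with gradient $g := \A^*(\A(W^*)-b)$, the subdifferential splits as $\partial f(W^*) = g + \mu\,\partial\|W^*\|_*$, and I will aim for the stronger conclusion that one can select $Y \in \partial\|W^*\|_*$ so that $U := g + \mu Y$ is identically zero; the inequality then holds trivially.

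To construct $Y$, I would work from the fixed-point identity in (ii). Writing a Schur decomposition $h(W^*) = Q\Lambda Q^T$, condition (ii) gives $W^* = Q\,\D_{\tau\mu}(\Lambda)\,Q^T$, and subtracting these two expressions yields
\begin{equation*}
\tau g \;=\; Q\bigl(\D_{\tau\mu}(\Lambda)-\Lambda\bigr)Q^T.
\end{equation*}
Partition $Q=[Q^{(1)},\,Q^{(3)}]$ according to whether $\lambda_i>\tau\mu$ (where $W^*$ has positive eigenvalue $\lambda_i-\tau\mu$) or $\lambda_i\leq\tau\mu$ (where $W^*$ has eigenvalue $0$); there is no $Q^{(2)}$ block because $W^*\succeq 0$. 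On the first block $\D_{\tau\mu}(\Lambda)-\Lambda = -\tau\mu\,I$ and on the second it equals $-\Lambda^{(3)}$, so the natural choice is $Y := Q^{(1)}Q^{(1)T} + Z$ with $Z := \tfrac{1}{\tau\mu}\,Q^{(3)}\Lambda^{(3)}Q^{(3)T}$; a direct calculation then confirms $g + \mu Y = 0$.

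The crux of the argument is to verify $Y \in \partial\|W^*\|_*$ via Theorem \ref{sub-nuclearnorm}. Orthogonality $Q^{(1)T}Z = 0$ is immediate from the column partition of $Q$, so what remains is $\|Z\|_2 \leq 1$, equivalently $|\lambda_i|\leq \tau\mu$ for every index of the $Q^{(3)}$ block. The upper inequality $\lambda_i \leq \tau\mu$ is the defining condition of the block; for the lower inequality, Weyl's inequality applied to $h(W^*) = W^* - \tau g$ together with $W^*\succeq 0$ gives $\lambda_i \geq -\tau\|g\|_2$, so it suffices to show $\|g\|_2 \leq \mu$. This is precisely the role of hypothesis (i): combined with the dimension-dependent bound $\|\A^*(v)\|_2 \leq n\|v\|_2$ available in the current setting, the strict inequality $\|\A(W^*)-b\|_2 < \mu/n$ yields $\|g\|_2 < \mu$. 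With that, $Y\in\partial\|W^*\|_*$, $U = 0 \in \partial f(W^*)$, and Theorem \ref{optimal-condition} certifies optimality of $W^*$.

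The remaining issue, and what I expect to be the main obstacle, is uniqueness, since $\tfrac12\|\A(W)-b\|_2^2$ is not strictly convex when $\A$ has a nontrivial kernel. My plan is to exploit the fact that the strict form of hypothesis (i) makes $\|Z\|_2$ \emph{strictly} less than one, so the subgradient $Y$ constructed above lies in the relative interior of $\partial\|W^*\|_*$. If $W^{**}$ were a second minimizer, convexity of $f$ along the segment $[W^*,W^{**}]\subset\SSn$ forces the quadratic part to be affine on the segment, hence $\A(W^{**}) = \A(W^*)$ and $g$ is unchanged at $W^{**}$; the same $Y$ must then lie in $\partial\|W^{**}\|_*$ as well, and the strict spectral bound together with Theorem \ref{sub-nuclearnorm} should force $W^{**}$ to share the range and the zero-eigenvalue structure of $W^*$, ultimately giving $W^{**} = W^*$.
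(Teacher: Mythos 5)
Your optimality argument is sound and is, mathematically, the same argument the paper gives, but in a more streamlined form. The paper first proves that $\widehat{X}=\D_\nu(Y^*)$ solves the prox subproblem $\min_{W\in\SSn}\nu\|W\|_*+\tfrac12\|W-Y^*\|_F^2$ by exhibiting $Y^*-\widehat{X}\in\nu\,\partial\|\widehat{X}\|_*$, identifies $\widehat{X}$ with $W^*$ via hypothesis~(ii), and only then invokes Theorem~\ref{optimal-condition} twice (once to read off an optimality certificate $U$ for the subproblem, once — after rescaling by $\tau$ — to transfer it to (\ref{lagrangian})). You shortcut this by directly producing $0\in\partial f(W^*)$; this is exactly what the paper's certificate $U$ reduces to (the paper shows $0\in\nu\partial\|\widehat{X}\|_*+\widehat{X}-Y^*$), so the substance is identical. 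Your bookkeeping of the blocks, the identity $\tau g = Q(\D_{\tau\mu}(\Lambda)-\Lambda)Q^T$, the absence of a ``$Q^{(2)}$'' block in $\partial\|W^*\|_*$ because $W^*\succeq 0$, the Weyl bound on the negative eigenvalues of $h(W^*)$, and the use of hypothesis (i) through the bound $\|\A^*y\|_F\le n\|y\|_2$ all mirror the paper's proof of the claim step. So for optimality you are essentially reproducing the paper's argument with fewer intermediate moves; that is a slight gain in clarity, not a different proof.

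On uniqueness the two arguments diverge, and neither is complete, but yours has a concrete gap you should be aware of. The paper simply asserts that the objective of (\ref{lagrangian}) is strictly convex, which is unjustified: $\|\cdot\|_*$ is not strictly convex, and $\tfrac12\|\A(\cdot)-b\|_2^2$ is not strictly convex whenever $\A$ has a nontrivial nullspace. You correctly flag this and attempt to repair it, and your first two steps are fine: if $W^{**}$ is another minimizer then convexity forces $\A(W^{**})=\A(W^*)$, so $g$ is unchanged and the same $Y=-g/\mu$ must lie in $\partial\|W^{**}\|_*$. But the conclusion you hope this ``should force'' does not follow. The strict bound $\|Z\|_2<1$ together with $Y\in\partial\|W^{**}\|_*$, $W^{**}\succeq 0$, and the duality identity $\langle Y,W^{**}\rangle=\|W^{**}\|_*$ yields only $\Range(W^{**})\subseteq\Range(W^*)$, not equality of ranges; and even with equal ranges, the information at hand — namely $\A(W^{**}-W^*)=0$, $\trace(W^{**})=\trace(W^*)$, and both matrices supported on $\Range(W^*)$ — does not pin down $W^{**}$ unless $\A$ restricted to symmetric matrices with range in $\Range(W^*)$ is injective modulo the trace functional, which is not assumed. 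So the step ``ultimately giving $W^{**}=W^*$'' is a genuine gap: under the stated hypotheses it would require an additional injectivity condition on $\A$ (or a different argument), and no routine spectral computation closes it.
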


\begin{proof}
Let $\nu=\tau\mu$ and $Y^*=h(W^*)=W^*+E \in \Sn$, where
$E=-\tau\A^*(\A(W^*)-b).$
We claim that
  $\D_\nu(Y^*)$ is the unique optimal solution to the following problem
  \begin{align}\label{claim-problem}
  \min_{W \in \SSn} ~\nu\|W\|_*+\frac{1}{2}\|W-Y^*\|_F^2,
  \end{align}

In fact, since the objective  function
$\nu\|W\|_*+\frac{1}{2}\|W-Y^*\|_F^2$ is strictly convex, there
exists a unique minimizer, and we only need to prove that it is
equal to $\D_\nu(Y^*)$. Without loss of generality, we assume that
the eigenvalues of $Y^*$ can be ordered as
\begin{align*}
    \lambda_1(Y^*)\geq\cdots\geq\lambda_t(Y^*)\geq\nu>\lambda_{t+1}(Y^*)\geq\cdots>0>\cdots\geq\lambda_{s}(Y^*),
    \lambda_{s+1}(Y^*)=\cdots=\lambda_{n}(Y^*)=0.
\end{align*}
 We compute a Schur decomposition of
$Y^*$ as
\begin{align*}
    Y^*=Q^{(1)}\Lambda^{(1)}Q^{(1)T}+Q^{(2)}\Lambda^{(2)}Q^{(2)T},
\end{align*}
where $\Lambda^{(1)}=\diag(\lambda_1,\ldots,\lambda_t)$,
$\Lambda^{(2)}=\diag(\lambda_{t+1},\ldots,\lambda_s)$,  $Q^{(1)}$
and $Q^{(2)}$ are block matrices corresponding to $\Lambda^{(1)}$
and $\Lambda^{(2)}$ respectively. Let $\widehat{X}=\D_\nu(Y^*)$, we
have
\begin{align*}
    \widehat{X}=Q^{(1)}(\Lambda^{(1)}-\nu I)Q^{(1)T},
\end{align*}
therefore,
\begin{align*}
    Y^*-\widehat{X}=\nu(Q^{(1)}Q^{(1)T}+Z),\quad
    Z=\nu^{-1}Q^{(2)}\Lambda^{(2)}Q^{(2)T}.
\end{align*}
By definition, $Q^{(1)T}Z=0$.
\begin{itemize}
  \item If $\lambda_{t+1}(Y^*)\geq |\lambda_s(Y^*)|$, then
$\|Z\|_2=\lambda_{t+1}(Y^*)/\nu<1$.
  \item Otherwise, let $y=(y_1,\ldots,y_p)^T=\A(W^*)-b\in\RR^p$,
then
\begin{equation*}
    \|E\|_F^2= \tau^2\|\A^* y\|_F^2
          \leq \tau^2 n^2 (y_1^2+\cdots+y_p^2)
             < \tau^2\mu^2.
\end{equation*}
\end{itemize}
Notice that $E\in\Sn$ and $W^*\in\SSn$, by \cite[Theorem
8.1.5]{Golub1996}, we have
\begin{align*}
    \|Z\|_2=\frac{|\lambda_s(Y^*)|}{\nu}=\frac{\max\{|\lambda_1(E)|,|\lambda_n(E)|\}}{\nu}
    \leq\frac{\|E\|_F}{\nu}<1.
\end{align*}
Hence, according to  Theorem \ref{sub-nuclearnorm}, we have
$Y^*-\widehat{X}\in \nu \partial \|\widehat{X}\|_*$, which means
that $0\in \nu \partial \|\widehat{X}\|_*+\widehat{X}-Y^*$. By
Theorem \ref{optimal-condition}, we immediately conclude that
$\D_\nu(Y^*)$ is an optimal solution of the problem
(\ref{claim-problem}).

Since  the objective function of the problem (\ref{lagrangian}) is
strictly convex,  its   optimal solution is also unique. If
$W^*=\D_{\tau\mu}(Y^*)$, by Theorem \ref{optimal-condition}, there
exists a matrix $U\in \nu\partial\|W^*\|_*+W^*-Y^*$ such that
\begin{align*}
    \langle U, V-W^*\rangle\geq0, \quad \forall~V\in\SSn.
\end{align*}
Let $\tildeU=U/\tau$, by substituting $\nu=\tau\mu$ and
$Y^*=W^*-\tau\A^*(\A(W^*)-b)$ into the above subdifferential
function, we have $\tildeU\in \mu\partial\|W^*\|_*+\A^*(\A(W^*)-b)$
satisfying
\begin{align*}
    \langle \tildeU, V-W^*\rangle\geq0,\quad \forall~V\in\SSn.
\end{align*}
By applying Theorem \ref{optimal-condition} once again,  it is true
that $W^*$ is the optimal solution of the problem
(\ref{lagrangian}).
\end{proof}

\section{Convergence analysis}

In this section, we analyze the convergence properties of the
modified fixed point iterative scheme (\ref{iter}). We begin by
recording two lemmas which establish the non-expansivity of the
thresholding operator $\D_\nu(h(\cdot))$.

\begin{lemma}\label{lem1}
 The thresholding operator $\D_\nu$ is non-expansive, i.e., for any
 $X_1, X_2\in\Sn$,
 \begin{align}\label{non-expensive}
    \|\D_\nu(X_1)-\D_\nu(X_2)\|_F\leq\|X_1-X_2\|_F.
 \end{align}
Moreover,
\begin{align*}
    \|X_1-X_2\|_F=\|\D_\nu(X_1)-\D_\nu(X_2)\|_F\Longleftrightarrow
    X_1-X_2=\D_\nu(X_1)-\D_\nu(X_2).
\end{align*}
\end{lemma}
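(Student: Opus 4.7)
The plan is to reduce both claims to the classical (firm) non-expansiveness of the Frobenius-metric projection onto the positive semidefinite cone. The key observation I would start from is the identity
$$\D_\nu(X) = P_{\SSn}(X - \nu I),$$
where $P_{\SSn}$ denotes the orthogonal projection onto $\SSn$. This follows directly from the definition: if $X = Q\Lambda Q^T$ is a Schur decomposition, then $X - \nu I = Q(\Lambda - \nu I)Q^T$ is also a Schur decomposition, and the PSD projection of a symmetric matrix acts by replacing each eigenvalue with its positive part, giving $Q\,\diag(\{\lambda_i - \nu\}_+)\,Q^T$, which is $\D_\nu(X)$.

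With this identity in hand, I would invoke the firm non-expansivity of $P_{\SSn}$, applied with $A = X_1 - \nu I$ and $B = X_2 - \nu I$. The $\nu I$ shifts cancel inside the inner product, yielding
$$\|\D_\nu(X_1) - \D_\nu(X_2)\|_F^2 \leq \langle \D_\nu(X_1) - \D_\nu(X_2),\, X_1 - X_2\rangle,$$
from which Cauchy--Schwarz immediately gives the non-expansivity bound (\ref{non-expensive}). If the firm-non-expansivity of $P_{\SSn}$ is not taken as known, I would insert a one-line derivation from the variational characterization $\langle Z - P_{\SSn}(A),\, A - P_{\SSn}(A)\rangle \leq 0$ for all $Z \in \SSn$: apply this twice, once with $Z = P_{\SSn}(B)$ and once with the roles of $A$ and $B$ swapped, and add the two inequalities.

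For the equality case, suppose $\|X_1 - X_2\|_F = \|\D_\nu(X_1) - \D_\nu(X_2)\|_F$. Then every inequality in the chain above must be saturated. The Cauchy--Schwarz equality forces $\D_\nu(X_1) - \D_\nu(X_2)$ to be a nonnegative scalar multiple of $X_1 - X_2$, and the equal-norm condition pins the multiplier to $1$ (or makes both sides zero, in which case the claimed identity is trivial). The converse implication is immediate, since if $X_1 - X_2 = \D_\nu(X_1) - \D_\nu(X_2)$ then the Frobenius norms agree by definition.

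The proof is short once the projection identity is recognized, so I do not anticipate a single hard step so much as a potential bookkeeping issue: the Schur decomposition used to define $\D_\nu$ is not unique when $\nu$ coincides with an eigenvalue of $X$, so I would want to note at the outset that $P_{\SSn}(X - \nu I)$ is intrinsically defined in terms of $X$ alone, which in particular confirms that $\D_\nu$ is a well-defined function independent of the chosen eigenbasis. This small remark is the only piece that requires care; everything else is a clean appeal to convex projection theory.
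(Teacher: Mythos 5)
Your proof is correct, and it takes a genuinely different route from the paper. The paper argues directly on eigenvalues: it writes out Schur decompositions of $X_1,X_2$, expands $\|X_1-X_2\|_F^2-\|\D_\nu(X_1)-\D_\nu(X_2)\|_F^2$, and bounds the cross terms with the trace inequality $\Tr(XY)\leq\lambda(X)^T\lambda(Y)$ for symmetric matrices (citing Lewis), then handles the equality case by analyzing when that trace inequality is tight (simultaneous diagonalizability). Your observation that $\D_\nu(X)=P_{\SSn}(X-\nu I)$, with the shift cancelling in differences, replaces all of this bookkeeping by the firm non-expansivity of the metric projection onto a closed convex cone, and the equality case then falls out of the Cauchy--Schwarz equality condition rather than from a delicate matching of eigenvalue sums. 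What your approach buys: it is shorter, it proves the stronger firm non-expansiveness estimate $\|\D_\nu(X_1)-\D_\nu(X_2)\|_F^2\leq\langle \D_\nu(X_1)-\D_\nu(X_2),X_1-X_2\rangle$ (which is often what one actually wants in convergence analyses of iterations like (\ref{iter})), and, as you note, it shows in passing that $\D_\nu$ is well defined independently of the chosen eigenbasis. What the paper's computation buys is self-containedness at the matrix level and an explicit description, in the equality case, of how the spectra and eigenbases of $X_1$ and $X_2$ must align; but as a proof of the stated lemma your argument is complete and arguably cleaner, the only external ingredients being the spectral formula for the PSD projection and the standard variational characterization of projections, both of which you justify.
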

\begin{proof}
Let $X_1=Q^{(1)}\Lambda^{(1)}Q^{(1)T}$ and
$X_2=Q^{(2)}\Lambda^{(2)}Q^{(2)T}$ be Schur decompositions of $X_1$
and $X_2$, respectively, where
\begin{gather*}
    \Lambda^{(1)}=\left(
                    \begin{array}{cc}
                      \diag(\lambda_1) & 0 \\
                              0        & 0 \\
                    \end{array}
                  \right),\quad
    \Lambda^{(2)}=\left(
                    \begin{array}{cc}
                      \diag(\lambda_2) & 0 \\
                              0        & 0 \\
                    \end{array}
                  \right),
\end{gather*}
$\lambda_1=(\alpha_1,\ldots,\alpha_s)^T$ and
$\lambda_2=(\beta_1,\ldots,\beta_t)^T$ are vectors of eigenvalues of
$X_1$ and $X_2$ respectively, and $Q^{(1)}, Q^{(2)}$ are orthogonal
matrices. Suppose that
$\alpha_1\geq\cdots\geq\alpha_k\geq\nu>\alpha_{k+1}\geq\cdots\geq\alpha_s$
and
$\beta_1\geq\cdots\geq\beta_l\geq\nu>\beta_{l+1}\geq\cdots\geq\beta_t$,
then we have
\begin{align*}
    \bX_1:=\D_\nu(X_1)=Q^{(1)}\bL^{(1)}Q^{(1)T},\quad
    \bX_2:=\D_\nu(X_2)=Q^{(2)}\bL^{(2)}Q^{(2)T},
\end{align*}
where
\begin{gather*}
    \widetilde{\Lambda}^{(1)}=\left(
                    \begin{array}{cc}
                      \diag(\bl_1) & 0 \\
                              0        & 0 \\
                    \end{array}
                  \right),\quad
    \widetilde{\Lambda}^{(2)}=\left(
                    \begin{array}{cc}
                      \diag(\bl_2) & 0 \\
                              0        & 0 \\
                    \end{array}
                  \right),
\end{gather*}
$\bl_1=(\alpha_1-\nu,\ldots,\alpha_k-\nu)$ and
$\bl_2=(\beta_1-\nu,\ldots,\beta_l-\nu)$. Therefore, we have
\begin{align*}
    &\|X_1-X_2\|_F^2-\|\bX_1-\bX_2\|_F^2 \\
    &=\Tr((X_1-X_2)^T(X_1-X_2))-\Tr((\bX_1-\bX_2)^T(\bX_1-\bX_2))\\
    &=\Tr(X_1^TX_1-\bX_1^T\bX_1+X_2^TX_2-\bX_2^T\bX_2)-2\Tr(X_1^TX_2-\bX_1^T\bX_2)\\
    &=\sum_{i=1}^s\alpha_i^2-\sum_{i=1}^k(\alpha_i-\nu)^2+\sum_{i=1}^t\beta^2-\sum_{i=1}^l(\beta_i-\nu)^2-2\Tr(X_1^TX_2-\bX_1^T\bX_2).
\end{align*}

It is known that
for symmetric matrices $X,Y$,
\begin{gather*}
    \Tr (X\,Y)\leq \lambda(X)^T\lambda(Y),
\end{gather*}
with equality if and only if there exists an orthogonal matrix $Q$
such that
\begin{gather*}
    X=Q\diag(\lambda(X))Q^T,\quad Y=Q\diag(\lambda(Y))Q^T,
\end{gather*}
where $\lambda(X),\lambda(Y)$ are the vectors of eigenvalues of $X$
and $Y$ respectively (see \cite[Theorem 2.2]{Lewis96}). Hence,
without loss of generality, assuming $k\leq l\leq s\leq t$, we have
\begin{align*}
    \Tr(X_1^TX_2-\bX_1^T\bX_2)&=\Tr((X_1-\bX_1)^T(X_2-\bX_2)+(X_1-\bX_1)^T\bX_2+\bX_1^T(X_2-\bX_2))\\
                              &\leq \lambda(X_1- \bX_1)^T\lambda(X_2-
                              \bX_2)+ \lambda(X_1- \bX_1)^T\lambda(
                              \bX_2)+\lambda(\bX_1)^T\lambda(X_2-
                              \bX_2)\\
                              & \leq
                              \sum_{i=1}^l\alpha_i\nu+\sum_{i=l+1}^s\alpha_i\beta_i+
                              \sum_{i=1}^k(\beta_i-\nu)\nu+\sum_{i=k+1}^l\alpha_i(\beta_i-\nu)
\end{align*}
Therefore,
\begin{align*}
    \|X_1-X_2\|_F^2-\|\bX_1-\bX_2\|_F^2
     &\geq
    \sum_{i=1}^s\alpha_i^2-\sum_{i=1}^k(\alpha_i-\nu)^2+\sum_{i=1}^t\beta^2-\sum_{i=1}^l(\beta_i-\nu)^2\\
    &\hspace{0.5cm} -2(\sum_{i=1}^l\alpha_i\nu+\sum_{i=l+1}^s\alpha_i\beta_i+\sum_{i=1}^k(\beta_i-\nu)\nu+\sum_{i=k+1}^l\alpha_i(\beta_i-\nu))\\
    & \geq (\sum_{i=l+1}^s\alpha_i^2+\sum_{i=l+1}^t\beta_i^2-2\sum_{i=l+1}^s\alpha_i\beta_i)+\sum_{i=k+1}^l(2\beta_i\nu-\nu^2+\alpha_i^2-2\alpha_i\beta_i).
\end{align*}
Since $t\geq s$ and $\alpha_i^2+\beta_i^2-2\alpha_i\beta_i\geq0$, we
obtain
\begin{gather*}
    \sum_{i=l+1}^s\alpha_i^2+\sum_{i=l+1}^t\beta_i^2-2\sum_{i=l+1}^s\alpha_i\beta_i\geq0.
\end{gather*}
Moreover, since the function
$g(x)=2\beta_ix-x^2+\alpha_i^2-2\alpha_i\beta_i$ is monotonically
increasing in $[-\infty, \beta_i]$, and
$\alpha_i\leq\nu\leq\beta_i$, $i=k+1,\ldots,l$,
\begin{gather*}
    2\nu\beta_i-\nu^2+\alpha_i^2-2\alpha_i\beta_i>0, \quad
    i=k+1,\ldots,l.
\end{gather*}
Hence, we have
\begin{gather*}
    \|X_1-X_2\|_F^2-\|\bX_1-\bX_2\|_F^2\geq0,
\end{gather*}
i.e., (\ref{non-expensive}) holds.

Furthermore, if $\|X_1-X_2\|_F=\|\D_\nu(X_1)-\D_\nu(X_1)\|_F$, then
$s=t, k=l$ and $\alpha_i=\beta_i, i=k+1,\ldots,s$, which further
implies that $\Lambda^{(1)}-\bL^{(1)}=\Lambda^{(2)}-\bL^{(2)}$ and
$\Tr((X_1-\bX_1)^T(X_2-\bX_2))$ achieves its maximum. Hence,  there
exists an orthogonal matrix $Q$ such that
\begin{gather*}
    X_1-\bX_1=Q(\Lambda^{(1)}-\bL^{(1)})Q^T=Q(\Lambda^{(2)}-\bL^{(2)})Q^T=X_2-\bX_2,
\end{gather*}
which implies that
\begin{gather}\label{eq2}
    X_1-X_2=\D_\nu(X_1)-\D_\nu(X_2).
\end{gather}
Suppose (\ref{eq2}) holds, then
$\|X_1-X_2\|_F=\|\D_\nu(X_1)-\D_\nu(X_2)\|_F$, which completes the
proof.
\end{proof}

The following lemma and its proof are analogous to results in
\cite{{HYZ08},MGC09}.

\begin{lemma}\label{lem2}
Suppose that the step size $\tau$ satisfies $\tau\in(0,
2/\|\A\|_2^2)$. Then the operator $h(\cdot)=I(\cdot)-\tau
\A^*(\A(\cdot)-b)$ is non-expansive, i.e., for any
 $X_1, X_2\in\Sn$,
\begin{gather*}
    \|h(X_1)-h(X_2)\|_F\leq\|X_1-X_2\|_F.
\end{gather*}
Moreover, we have
\begin{gather*}
    \|h(X_1)-h(X_2)\|_F=\|X_1-X_2\|_F\Longleftrightarrow
    h(X_1)-h(X_2)=X_1-X_2,
\end{gather*}
where $I(\cdot)$ is an identity operator.
\end{lemma}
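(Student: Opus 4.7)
The plan is to exploit the fact that $h$ is an affine operator, so its increment $h(X_1)-h(X_2)$ depends only on $D:=X_1-X_2$ through $h(X_1)-h(X_2)=(I-\tau\A^*\A)D$. The non-expansivity claim then reduces to showing $\|(I-\tau\A^*\A)D\|_F\le\|D\|_F$ for every $D\in\Sn$, which is a clean Hilbert-space computation once the Frobenius inner product, the adjoint relation $\langle D,\A^*\A D\rangle=\langle\A D,\A D\rangle=\|\A D\|_2^2$, and the operator norm $\|\A\|_2$ are in place.

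Concretely, I would expand
\begin{align*}
\|(I-\tau\A^*\A)D\|_F^2
&=\|D\|_F^2-2\tau\langle D,\A^*\A D\rangle+\tau^2\|\A^*\A D\|_F^2\\
&=\|D\|_F^2-2\tau\|\A D\|_2^2+\tau^2\|\A^*\A D\|_F^2.
\end{align*}
Using $\|\A^* v\|_F\le\|\A\|_2\|v\|_2$ with $v=\A D$ gives $\|\A^*\A D\|_F^2\le\|\A\|_2^2\|\A D\|_2^2$, so
\begin{align*}
\|(I-\tau\A^*\A)D\|_F^2\le\|D\|_F^2-\tau\bigl(2-\tau\|\A\|_2^2\bigr)\|\A D\|_2^2.
\end{align*}
Since $\tau\in(0,2/\|\A\|_2^2)$, the coefficient $\tau(2-\tau\|\A\|_2^2)$ is strictly positive, which yields the desired inequality.

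For the equality characterization I would trace this chain in both directions. If $\|h(X_1)-h(X_2)\|_F=\|X_1-X_2\|_F$, the displayed inequality forces $\tau(2-\tau\|\A\|_2^2)\|\A D\|_2^2=0$, and the strict positivity of the prefactor gives $\A D=0$; then $\A^*\A D=0$ and $h(X_1)-h(X_2)=D-\tau\A^*\A D=D=X_1-X_2$. Conversely, if $h(X_1)-h(X_2)=X_1-X_2$, then $\tau\A^*\A D=0$, hence $\|\A D\|_2^2=\langle D,\A^*\A D\rangle=0$, so $\A D=0$ and the Frobenius norms agree trivially.

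I don't anticipate any real obstacle: the only delicate point is keeping track of the two inequalities (one from dropping the middle term, one from $\|\A^*\cdot\|_F\le\|\A\|_2\|\cdot\|_2$) so that the equality case pins down $\A D=0$ unambiguously under the hypothesis $\tau\|\A\|_2^2<2$. The assumption $\tau>0$ is used only to rule out the degenerate case $\tau=0$; everywhere else the argument depends on $\tau(2-\tau\|\A\|_2^2)>0$.
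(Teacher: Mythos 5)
Your proposal is correct and complete: the affine identity $h(X_1)-h(X_2)=(I-\tau\A^*\A)(X_1-X_2)$, the expansion giving $\|h(X_1)-h(X_2)\|_F^2\le\|X_1-X_2\|_F^2-\tau\bigl(2-\tau\|\A\|_2^2\bigr)\|\A(X_1-X_2)\|_2^2$, and the equality analysis forcing $\A(X_1-X_2)=0$ all check out. Note that the paper itself gives no proof of this lemma, stating only that it is analogous to results in the cited references (HYZ08, MGC09); your direct inner-product computation is essentially the standard argument used there (the co-coercivity/descent estimate specialized to the quadratic term), so you have simply supplied the omitted details along the intended lines.
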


We now claim that the modified fixed point iterations (\ref{iter})
converge to the optimal solution of the  problem (\ref{lagrangian}).

\begin{theorem}\label{convergence}
Let $\tau\in(0, 2/\|\A\|_2^2)$ and $W^*\in \SSn$ satisfy
\begin{enumerate}
  \item $\|\A(W^*)-b\|_2<\mu/n$ for a small positive number $\mu$.
  \item $W^*=\D_{\tau\mu}(h(W^*))$, where $h(\cdot)=I(\cdot)-\tau
\A^*(\A(\cdot)-b)$.
\end{enumerate}
Then the sequence $\{X^k\}$ obtained via modified fixed point
iterations (\ref{iter}) converges to $W^*$.
\end{theorem}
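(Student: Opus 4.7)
The plan is to leverage the non-expansivity established in Lemmas \ref{lem1} and \ref{lem2} together with the uniqueness assertion of Theorem \ref{eqivalence}. Since $\tau\in(0,2/\|\A\|_2^2)$, both $\D_{\tau\mu}$ and $h$ are non-expansive on $\Sn$, so their composition is non-expansive as well, and $W^*$ is a fixed point of it by hypothesis. Consequently,
\begin{equation*}
\|X^{k+1}-W^*\|_F = \|\D_{\tau\mu}(h(X^k))-\D_{\tau\mu}(h(W^*))\|_F \le \|X^k-W^*\|_F,
\end{equation*}
so the sequence $\{\|X^k-W^*\|_F\}$ is monotone non-increasing, converges to some limit $L\ge 0$, and $\{X^k\}$ is bounded in the finite-dimensional space $\Sn$.

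First I would extract a convergent subsequence $X^{k_j}\to\bar X$ and pass to the limit using continuity of $\D_{\tau\mu}\circ h$: then $X^{k_j+1}=\D_{\tau\mu}(h(X^{k_j}))\to \D_{\tau\mu}(h(\bar X))$, while simultaneously $\|X^{k_j+1}-W^*\|_F\to L=\|\bar X-W^*\|_F$. This forces every inequality in the chain
\begin{equation*}
\|\D_{\tau\mu}(h(\bar X))-\D_{\tau\mu}(h(W^*))\|_F \le \|h(\bar X)-h(W^*)\|_F \le \|\bar X-W^*\|_F
\end{equation*}
to be an equality. The sharper ``moreover'' clauses of Lemmas \ref{lem1} and \ref{lem2} then give
\begin{equation*}
\D_{\tau\mu}(h(\bar X))-W^* \;=\; h(\bar X)-h(W^*) \;=\; \bar X-W^*,
\end{equation*}
so in particular $\D_{\tau\mu}(h(\bar X))=\bar X$; that is, $\bar X$ is itself a fixed point of the iteration.

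Next I would verify that $\bar X$ satisfies both hypotheses of Theorem \ref{eqivalence}, so that its uniqueness conclusion applies. The identity $h(\bar X)-h(W^*)=\bar X-W^*$ reduces to $\tau\A^*(\A(\bar X)-\A(W^*))=0$; taking the Frobenius inner product with $\bar X-W^*$ produces $\tau\|\A(\bar X)-\A(W^*)\|_2^2=0$, so $\A(\bar X)=\A(W^*)$ and hence $\|\A(\bar X)-b\|_2<\mu/n$. Combined with the fixed-point relation $\bar X=\D_{\tau\mu}(h(\bar X))$ and $\bar X\in\SSn$ (which follows because $X^{k_j}\in\SSn$ for $k_j\ge 1$ and $\SSn$ is closed), Theorem \ref{eqivalence} identifies $\bar X$ as the unique optimal solution of (\ref{lagrangian}), forcing $\bar X=W^*$. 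Since every convergent subsequence of the bounded sequence $\{X^k\}$ shares the limit $W^*$, the entire sequence converges to $W^*$.

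The main obstacle is the step that collapses the three non-expansive inequalities to equalities in the limit, since only then do the rigid equality clauses of Lemmas \ref{lem1} and \ref{lem2} engage to promote a mere subsequential limit into a genuine fixed point; plain non-expansivity alone would deliver a bounded monotone distance sequence but no identification of the limit. A secondary subtlety is ensuring that this fixed point $\bar X$ inherits the residual bound $\|\A(\bar X)-b\|_2<\mu/n$, so that Theorem \ref{eqivalence} can be applied to it to yield uniqueness as an optimal solution of (\ref{lagrangian}), rather than merely as a fixed point of $\D_{\tau\mu}\circ h$.
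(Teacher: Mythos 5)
Your proposal is correct and follows essentially the same route as the paper: monotonicity of $\|X^k-W^*\|_F$ via non-expansivity of $\D_{\tau\mu}\circ h$, the rigidity (``moreover'') clauses of Lemmas \ref{lem1} and \ref{lem2} to promote a subsequential limit $\bar X$ into a fixed point, and Theorem \ref{eqivalence} to conclude $\bar X=W^*$. In fact you are a bit more careful than the paper at one point: the paper simply \emph{assumes} its limit point satisfies $\|\A(\bX)-b\|_2<\mu/n$, whereas you derive $\A(\bar X)=\A(W^*)$ from the equality $h(\bar X)-h(W^*)=\bar X-W^*$ (and also note $\bar X\in\SSn$), so the hypotheses of Theorem \ref{eqivalence} are verified rather than postulated.
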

\begin{proof}
Let $\nu=\tau\mu$. Since both $\D_\nu(\cdot)$ and $h(\cdot)$ are
non-expansive, $\D_\nu(h(\cdot))$ is also non-expansive. Therefore,
$\{X^k\}$ lies in a compact set and must have a limit point. Suppose
 $\bX=\lim_{j\longrightarrow\infty}X^{k_j}$ satisfying  $\|\A(\bX)-b\|_2<\mu/n$.
By $W^*=\D_\nu(h(W^*))$, we have
\begin{align*}
    \|X^{k+1}-W^*\|_F=\|\D_\nu(h(X^k))-\D_\nu(h(W^*))\|_F\leq\|h(X^k)-h(W^*)\|_F \leq\|X^k-W^*\|_F,
\end{align*}
which means that the sequence $\{\|X^k-W^*\|_F\}$ is monotonically
non-increasing. Therefore
\begin{gather*}
    \lim_{k\longrightarrow\infty}\|X^k-W^*\|_F=\|\bX-W^*\|_F,
\end{gather*}
where $\bX$ can be any limit point of $\{X^k\}$. By the continuity
of $\D_\nu(h(\cdot))$, we have
\begin{gather*}
    \D_\nu(h(\bX))=\lim_{j\longrightarrow\infty}\D_\nu(h(X^{k_j}))=\lim_{j\longrightarrow\infty}X^{k_j+1},
\end{gather*}
i.e., $\D_\nu(h(\bX))$ is also a limit point of $\{X^k\}$.
Therefore, we have
\begin{gather*}
    \|\D_\nu(h(\bX))-\D_\nu(h(W^*))\|_F=\|\D_\nu(h(\bX))-W^*\|_F=\|\bX-W^*\|_F.
\end{gather*}
Using Lemma \ref{lem1} and Lemma \ref{lem2} we obtain
\begin{gather*}
    \D_\nu(h(\bX))-\D_\nu(h(W^*))=h(\bX)-h(W^*)=\bX-W^*,
\end{gather*}
which implies $ \D_\nu(h(\bX))=\bX$. By Theorem \ref{eqivalence},
$\bX$ is the optimal solution to the problem (\ref{lagrangian}),
i.e., $\bX=W^*$. Hence, we have
\begin{gather*}
    \lim_{k\longrightarrow\infty}\|X^k-W^*\|_F=0,
\end{gather*}
i.e., $\{X^k\}$ converges to its unique limit point $W^*$.
\end{proof}

\section{Implementation}

This section provides implementation details of the modified FPC
algorithm for  solving the minimum-rank Gram matrix completion
problem.


\subsection{Evaluation of the eigenvalue thresholding  operator}

The main computational cost of the modified FPC algorithm is
computing the Schur decompositions. 
Following the strategies in \cite{CCS08,TY09}, we
 use PROPACK \cite{PROPACK} in Matlab to compute a partial Schur decomposition of a symmetric matrix.

%

PROPACK can not automatically compute only eigenvalues greater than
a given threshold $\nu$. To use this package, we must predetermine
the number $s_{k}$ of eigenvalues of $Y^k$ to compute at the $k$-th
iteration.   Suppose $X^{k}=Q^{k-1} \Lambda^{k-1}(Q^{k-1})^T$, we
set $s_k$ equal to the number of diagonal entries of $\Lambda^{k-1}$
that are no less than $\varepsilon_k \|\Lambda^{k-1}\|_2$, where
$\varepsilon_k$ is a
small positive number.  
Notice  that $s_k$ is non-increasing.    If $s_k$ is too small,  the
non-expansive property (\ref{non-expensive}) of the thresholding
operator $\D_\nu$ may be violated.  We increase  $s_k$  by 1 if the
non-expansive property is violated 10 times \cite{MGC09}.

\subsection{Barzilai-Borwein technique}

In \cite{MGC09}, the authors always set the parameter $\tau=1$ since
their operator $\A$ is generated by randomly sampling a subset of
$p$ entries from matrices with i.i.d. standard Gaussian entries. For
this linear map, the Lipschitz constant for the objective function
of (\ref{lagrangian}) is 1. According to Theorem \ref{convergence},
convergence for the Gram matrix completion problem is guaranteed
provided that $\tau\in(0,~2/\|\A\|_2^2)$. This choice is, however,
too conservative and the convergence is typically slow.

There are many  ways to select a step size.  For simplicity, we
describe a strategy, which is  based on the Barzilai-Borwein method
\cite{BB88}, for choosing the step size $\tau_k$. Let
$g(\cdot)=\A^*(\A(\cdot)-b)$ and $g^k=\A^*(\A(X^k)-b)$. We perform
the shrinkage iteration (\ref{iter}) along the negative gradient
direction $g^k$ of the
smooth function $\frac{1}{2}\|\A(X^k)-b\|_2^2$,  
 then apply the thresholding operator $\D_\nu(\cdot)$ to
accommodate the non-smooth term $\|X\|_*$. Hence, it is natural to
choose $\tau_k$ based on the function $\frac{1}{2}\|\A(X^k)-b\|_2$
alone. Let
\begin{gather*}
    \Delta X=X^k-X^{k-1},\quad \Delta g=g^k-g^{k-1}.
\end{gather*}
The Barzilai-Borwein step provides a two-point approximation to the
secant equation underlying quasi-Newton method, specifically,
\begin{align*}
    \tau_k=\frac{\langle \Delta X, \Delta g\rangle}{\langle\Delta g, \Delta
    g\rangle},\quad\mbox{or}\quad \tau_k=\frac{\langle \Delta X, \Delta X\rangle}{\langle\Delta X, \Delta
    g\rangle}.
\end{align*}
In order to avoiding  the parameter $\tau_k$ being either too small
or too large, we take
\begin{align*}
     \tau_k=\max\{\tau_{min}, \min\{\tau_k, \tau_{max}\}\},
\end{align*}
where $0<\tau_{min}<\tau_{max}<\infty$ are fixed parameters.

The idea of using the BB step to accelerate the convergence of
gradient algorithms has also appeared in \cite{WYGZ09}.

\subsection{Algorithms}

As suggested in \cite{HYZ08,MGC09,TY09}, we adopt a continuation
strategy to solve the regularized linear least squares problem
(\ref{lagrangian}).
For the problem (\ref{lagrangian}) with a target parameter
$\bar{\mu}$  being  a moderately small number, we propose solving a
sequence of problems (\ref{lagrangian}) defined by an decreasing
sequence $\mu_k$. When a new problem, associated with $\mu_{k+1}$,
is to be solved, the approximate solution for the current problem
with $\mu_k$ is used as the starting point. We use the parameter
$\eta$ to determine the rate of reduction of the consecutive
$\mu_k$, i.e.,
\begin{gather*}
    \mu_{k+1}=\max(\eta\mu_k,\bar{\mu}),\quad k=1,\ldots,L-1.
\end{gather*}

Our modified fixed point continuation iterative scheme with the
Barzilai-Borwein technique for solving (\ref{lagrangian}) is
outlined below.
\begin{ouralgorithm}{MFPC-BB} \label{alg:MFPC-BB}

 \Inspec  Parameters $0<\tau_{min}<\tau_0<\tau_{max}<\infty$,
 $\mu_1>\bar{\mu}>0$, $\eta>0$ and a tolerance $\epsilon>0$

 \Outspec A numeric Gram matrix.

 \begin{description}
  \item[-] Set $X^0=0$.
  \item[-] \textbf{For} $\mu=\mu_1,\ldots,\mu_L$, \textbf{do}

                   \begin{enumerate}
                     \item  Choose a step size $\tau_k$ via the BB technique such that $\tau_{min}\leq\tau_k\leq\tau_{max}$.
                     \item  Compute $Y^k=X^k-\tau_k\A^*(\A(X^k)-b)$ and a Schur decomposition of $Y^k=Q^k~\Lambda^k~(Q^k)^T$.
                     \item  Compute $X^{k+1}=Q^k~\D_{\tau_k\mu_k}(\Lambda^k)~(Q^k)^T$.
                   \end{enumerate}

  \item[-] \textbf{If} the stop criterion is true, \textbf{then return} $X_{\opt}$.
  \item[-] \textbf{end for}.
 \end{description}
\end{ouralgorithm}

However, as shown in \cite{BT09, JY09, TY09}, the above algorithm
may converge as $O(1/k)$.
 Very recently, alternative algorithms that
could speed up the performance of the gradient method FPC have been
proposed in \cite{JY09,TY09}. These algorithms rely on computing the
next iterate based not only on the previous one, but also  on two or
more previously computed iterates. We incorporate this new
accelerating technique in our MFPC-BB algorithm to solve the affine
constrained low-rank Gram matrix completion problem
(\ref{lagrangian}). The accelerated algorithm, called AFPC-BB, keeps
the simplicity of MFPC-BB but shares the improved rate $O(1/k^2)$ of
the optimal gradient method.

\begin{ouralgorithm}{AFPC-BB} \label{alg:AFPC-BB}

 \Inspec  Parameters $0<\tau_{min}<\tau_0<\tau_{max}<\infty$,
 $\mu_1>\bar{\mu}>0$, $\eta>0$ and tolerance $\epsilon>0$

 \Outspec A numeric Gram matrix.

 \begin{description}
  \item[-] Set $X^0=0$.
  \item[-] \textbf{For} $\mu=\mu_1,\ldots,\mu_L$, \textbf{do}

                   \begin{enumerate}
                     \item  Choose a step size $\tau_k$ via the BB technique such that $\tau_{min}\leq\tau_k\leq\tau_{max}$.
                     \item  Compute $Z^k=X^k+\frac{t_{k-1}-1}{t_k}(X^k-X^{k-1})$.
                     \item  Compute $Y^k=Z^k-\tau_k\A^*(\A(Z^k)-b)$ and a Schur decomposition of $Y^k=Q^k~\Lambda^k~(Q^k)^T$.
                     \item  Compute $X^{k+1}=Q^k~\D_{\tau_k\mu_k}(\Lambda^k)~(Q^k)^T$.
                     \item  Compute $t_{k+1}=\frac{1+\sqrt{1+4 t_k^2}}{2}$.
                   \end{enumerate}

  \item[-] \textbf{If} the stop criterion is true, \textbf{then return} $X_{\opt}$.
  \item[-] \textbf{end for}.
 \end{description}
\end{ouralgorithm}

The following theorem shows that by performing the gradient step at
the matrix $Z^k$ instead of at the approximate solution $X^k$, the
convergence rate of the MFPC-BB method can be accelerated to
$O(1/k^2)$.
\begin{theorem}\cite{JY09,TY09}
Let \{$X^k$\} be the sequence generated by the  AFPC-BB algorithm.
Then for any $k>1$, we have
\begin{gather}\label{accel}
    F(X^k)-F(X^*)\leq\frac{C\|X^*-X^0\|_F^2}{(k+1)^2}, 
\end{gather}
where $C$ is a constant, $F(X)$ is the objective function and $X^*$
is the optimal solution  of the problem (\ref{lagrangian}).
\end{theorem}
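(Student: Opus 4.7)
The plan is to adapt the standard convergence analysis of Nesterov-type accelerated proximal gradient schemes (in particular, the FISTA argument of Beck--Teboulle) to the present setting. Write $F(W) = g(W) + \mu\|W\|_*$, where $g(W) = \frac{1}{2}\|\A(W)-b\|_2^2$ is convex and differentiable with Lipschitz-continuous gradient $\nabla g(W) = \A^*(\A(W)-b)$ of constant $L = \|\A\|_2^2$, while $\|\cdot\|_*$ is convex but non-smooth. The proof of Theorem \ref{eqivalence} already identifies $\D_\nu$ as the proximal operator of $\nu\|\cdot\|_*$ restricted to $\SSn$, so the inner update of AFPC-BB may be recast as
\begin{gather*}
X^{k+1} = \D_{\tau_k\mu_k}\bigl(Z^k - \tau_k\,\nabla g(Z^k)\bigr),
\end{gather*}
that is, as a single proximal gradient step taken from the extrapolated point $Z^k$ rather than from $X^k$.

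Next, I would establish the standard one-step descent inequality that drives every FISTA-type proof: whenever $\tau_k \leq 1/L$, for every $W \in \SSn$,
\begin{gather*}
F(X^{k+1}) - F(W) \leq \frac{1}{2\tau_k}\bigl(\|W - Z^k\|_F^2 - \|W - X^{k+1}\|_F^2\bigr).
\end{gather*}
This inequality follows from combining the quadratic upper bound on $g$ at $Z^k$ with the subgradient characterization of the proximal step supplied by Theorem \ref{optimal-condition}; the PSD constraint enters only through that variational inequality and causes no additional difficulty. Applying the descent inequality twice, first with $W = X^*$ and then with $W = X^k$, and combining the two with weights $1/t_{k+1}$ and $1 - 1/t_{k+1}$, the defining identity $t_{k+1}^2 - t_{k+1} = t_k^2$ makes the bound telescope. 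Introducing the auxiliary iterates $u_k = t_k X^k - (t_k-1) X^{k-1}$, one obtains
\begin{gather*}
2\tau t_{k+1}^2\bigl(F(X^{k+1}) - F(X^*)\bigr) + \|u_{k+1} - X^*\|_F^2 \leq 2\tau t_k^2\bigl(F(X^k) - F(X^*)\bigr) + \|u_k - X^*\|_F^2,
\end{gather*}
so that $t_k^2 (F(X^k) - F(X^*))$ is bounded by a constant multiple of $\|X^0 - X^*\|_F^2$. A short induction on $t_{k+1} = (1+\sqrt{1+4t_k^2})/2$ gives $t_k \geq (k+1)/2$, yielding the claimed rate with $C = 2L = 2\|\A\|_2^2$.

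The main obstacle is controlling the step size $\tau_k$ produced by the Barzilai--Borwein rule, since the descent lemma above requires $\tau_k \leq 1/\|\A\|_2^2$ but the raw BB choice provides no such guarantee. Following \cite{JY09,TY09}, there are two clean remedies: either enforce $\tau_{\max} \leq 1/\|\A\|_2^2$ at the implementation level, in which case the argument goes through verbatim, or augment AFPC-BB with a backtracking line search that shrinks $\tau_k$ until the sufficient-decrease inequality $g(X^{k+1}) \leq g(Z^k) + \langle \nabla g(Z^k),\, X^{k+1}-Z^k\rangle + \frac{1}{2\tau_k}\|X^{k+1}-Z^k\|_F^2$ is satisfied. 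In the latter case the telescoping is carried out with the variable step $\tau_k$ bounded below by $\beta/\|\A\|_2^2$ for some $\beta \in (0,1]$, and the $O(1/k^2)$ rate is recovered with a slightly larger absolute constant $C$ depending on $\beta$.
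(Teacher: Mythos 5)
The paper does not prove this theorem; it cites it directly from \cite{JY09,TY09}, and your FISTA-style argument (extrapolated point, one-step descent inequality, weighted combination driven by $t_{k+1}^2 - t_{k+1} = t_k^2$, telescoping with the auxiliary sequence $u_k$, and $t_k \geq (k+1)/2$) is precisely the route those references take, so you have in effect reconstructed the cited proof. Your identification of $\D_\nu$ as the proximal map of $\nu\|\cdot\|_*$ over $\SSn$ is correct, but it can be obtained more directly than by appealing to Theorem \ref{eqivalence}: on $\SSn$ one has $\nu\|W\|_* + \tfrac{1}{2}\|W-Y\|_F^2 = \tfrac{1}{2}\|W-(Y-\nu I)\|_F^2 + \text{const}$, so the minimizer is just the Frobenius projection of $Y - \nu I$ onto the PSD cone, which is $\D_\nu(Y)$ for any symmetric $Y$ with no auxiliary hypothesis on $\|\A(W^*)-b\|_2$. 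Your flag on the step size is the most substantive point: the FISTA descent lemma needs $\tau_k \leq 1/\|\A\|_2^2$ (or a backtracking surrogate), whereas the paper's own experiments take $\tau_{\max} = 10/L$, so the stated $O(1/k^2)$ bound is not actually guaranteed for the algorithm as implemented; making this a standing hypothesis (as you propose) or adding a line search is necessary for the theorem to apply, and the paper leaves this implicit by citing \cite{JY09,TY09} without restating their step-size assumptions.
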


\section{Numerical experiments}

In this section, we report the performance of our modified FPC
algorithms for writing a real positive semidefinite polynomial as a
sum of minimum number of squares of polynomials.
In our tests, we generate positive semidefinite matrices
$W\in\QQ^{n\times n}$ with rank $r$ by randomly sampling an $n\times
r$ factor $L$ with rational entries and setting $W=LL^{\text{T}}$.
After multiplying  the matrix $W$  by a monomial vector $\mon$ and
its transpose, we obtain a positive semidefinite polynomial
\begin{gather*}
    f(x)=\mon^T \cdot W \cdot \mon\in\QQ[x].
\end{gather*}
Replacing  entries in $W$ by  parameters, expanding the right-hand
side of the equality and matching coefficients of the monomials, we
obtain a set of linear equations  which can be written as
\begin{equation}\label{lincons}
{\A}(W) = b,
\end{equation}
where  $\A$ is the linear map from  $\Sn$ to $\RR^p$.

Since the SOS representation of a nonnegative polynomial is in
general not unique, the solution $X_{\opt}$ returned by MFPC-BB and
AFPC-BB algorithms probably doesn't correspond to the constructed
rational Gram matrix $W$. Therefore, in stead of setting relative
error equal to $\|X_{\opt}-W\|_F/\|W\|_F$, which is used in
\cite{CCS08,CR08,MGC09,TY09}, we choose to measure the accuracy of
the computed solution $X_{\opt}$ by the relative error defined by:
\begin{equation}\label{rel-err}
    \err := \frac{\|\A(X_{\opt})-b\|_2}{\|b\|_2}.
\end{equation}
The  relative error also gives us a stopping criterion  for the
MFPC, MFPC-BB, AFPC-BB algorithms in our numerical experiments. 
 We declared
that the Gram matrix is approximately recovered if the relative
error is less than  a given tolerance denoted by $\epsilon$. 

An $n\times n$ symmetric matrix of rank $r$ depends on $d_r =
n(2n-r+1)/2$ degrees of freedom.  Let  $FR=d_r/p$  be the ratio
between the degrees of freedom in an $n\times n$ symmetric matrix of
rank $r$ and the number of linear constrains  defined in
(\ref{lincons}). If $FR$ is large (close to  1), recovering $W$
becomes harder as the number of measurements is close to the degrees
of freedom. Conversely, if $FR$ is close to zero, recovering $W$
becomes easier.
Note that if $FR>1$,  there might have  an infinite number of
matrices with rank $r$ satisfying given affine constraints.

Throughout the experiments, we choose an initial matrix  $X^0$ to be
a zero matrix. For each test, we make an initial estimate of the
value $L=\|\A\|_2^2$ which is the smallest Lipschitz constant of the
gradient of $\frac{1}{2}\|\A X-b\|_2^2$. We
 set the Barzilai-Borwein parameters $\tau_{max}=10/L$ and
$\tau_{min}=10^{-3}/L$. The thresholds $10$ and $10^{-3}$ are found
after some experimentations.

We have implemented the MFPC-BB and AFPC-BB algorithms in MATLAB,
using PROPACK package to evaluate partial eigenvalue decompositions.
All runs are conducted on  a HP xw8600  workstation with an Inter
Xeon(R) 2.67GHz CPU and 3.00 GB of RAM.

\subsection{Numerical experiments on random  Gram matrix
completion problems}

In the first series of test, we set $\epsilon=5\times10^{-3}$ and
compare the performance of the MFPC, MFPC-BB and AFPC-BB algorithms
without continuation technique to solve (\ref{lagrangian}) for
randomly generated matrix completion
problems with moderate dimensions. 
In order to   see  the convergence behaviors of MFPC, MFPC-BB and
AFPC-BB clearly, we compute the full Schur decompositions at each
iteration.  

Table \ref{tab1} reports the degree of freedom ratio $FR$, the
number of
iterations, 
and the error (\ref{rel-err}) of the three algorithms MFPC, MFPC-BB,
AFPC-BB. As can be seen from Table \ref{tab1}, on the condition that
these three algorithms achieve  similar errors, MFPC-BB provides
better performance with less number of iterations than MFPC,  while
AFPC-BB outperforms the other two algorithms greatly in terms of the
number
of iterations.  

\begin{table*}[!htb]
\begin{center}
\begin{tabular}{cccc|cc|cc|cc}
\hline   \multicolumn{4}{c|}{Problems}& \multicolumn{2}{c|}{MFPC}  & \multicolumn{2}{c|}{MFPC-BB} & \multicolumn{2}{c}{AFPC-BB} \\
\hline   n  &  r  &  p  & $FR$  &$\#$~iter     &  error  &$\#$~iter     & error   &$\#$~iter     & error  \\
\hline
\hline  100 & 10  & 579 & 1.6494&  140    & 4.99e-3 & 75      & 4.95e-3 & 31      & 4.76e-3\\
\hline  200 & 10  & 1221& 1.6011&  187    & 4.99e-3 & 105     & 4.97e-3 & 37      & 4.88e-3\\
\hline  500 & 10  & 5124& 0.9670&  632    & 4.99e-3 & 499     & 4.99e-3 & 66      & 4.90e-3\\
\hline
\end{tabular}
\end{center}
\caption{\label{tab1} Comparison of MFPC, MFPC-BB and AFPC-BB,
without using continuation technique.}
\end{table*}

In Figure 1 and Figure 2,  
we plot the relative error $\|\A(X^k)-b\|_2/\|b\|_2$ and
approximation error $\|X^k-X_{opt}\|_F$ versus the iteration number
of these three methods on recovering a randomly generated $500
\times 500$ Gram matrix with  rank $10$ respectively. We terminate
these three algorithms when the relative error (\ref{rel-err}) is
below $5\times10^{-3}$. We observe  that in both cases AFPC-BB
converges much fast than MFPC-BB and MFPC. The comparison of MFPC-BB
and MFPC clearly shows that the Barzilai-Borwein technique is quite
effective in accelerating the convergence of the MFPC algorithm.

\begin{figure}[!htbp]
\hspace*{-2mm}
\begin{minipage}[t]{0.4\linewidth}
    \centering
    \includegraphics[width=3.5in]{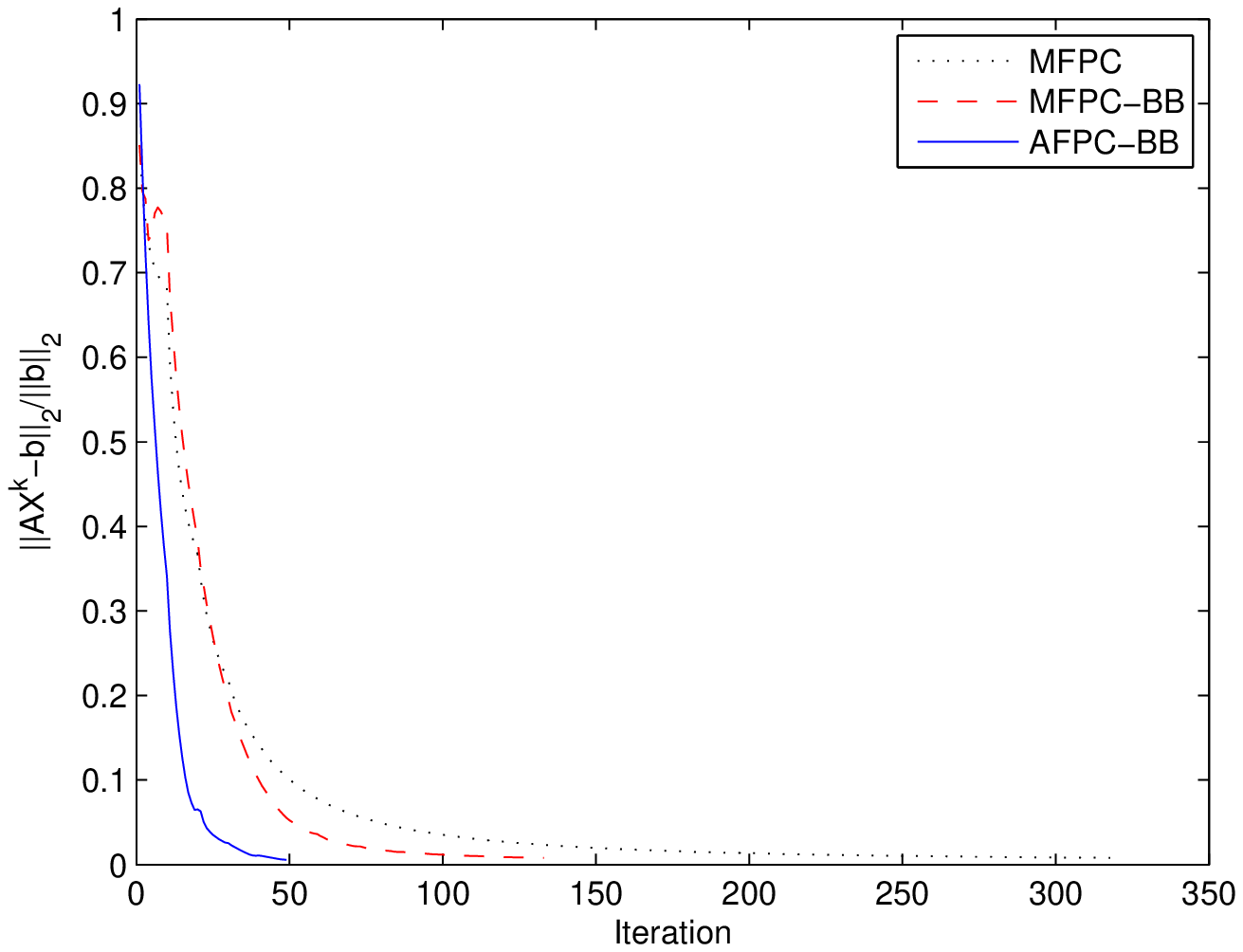}
    \caption{ Relative error versus iteration number for a
problem with $n=500, r=10$.}
    \label{Fig 1a}
\end{minipage}
\hspace*{15mm}
\begin{minipage}[t]{0.4\linewidth}
    \centering
    \includegraphics[width=3.5in]{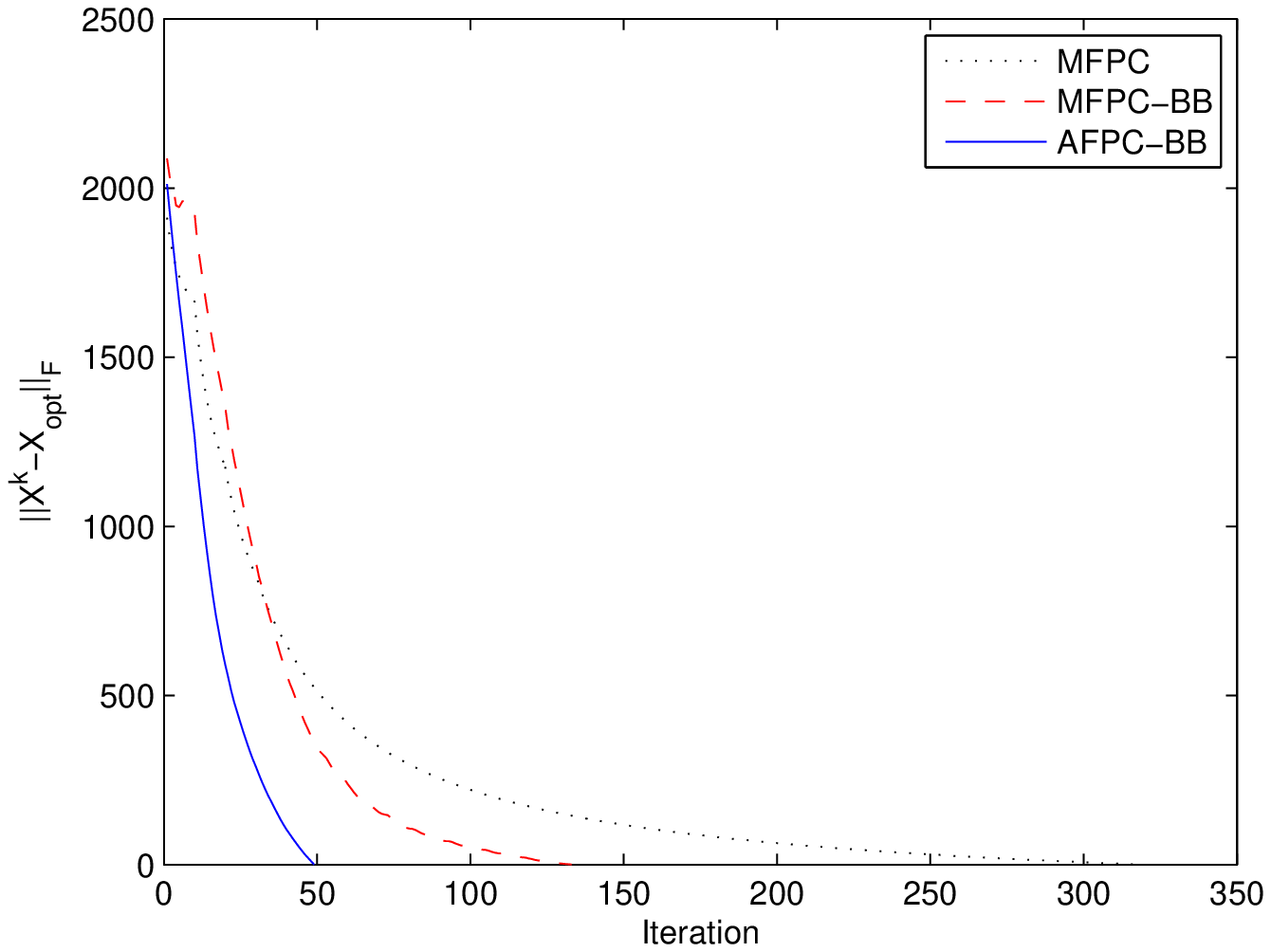}
     \caption{ Approximation error versus iteration number for a
problem with $n=500, r=10$.}
    \label{Fig 1b}
\end{minipage}

\end{figure}

In  Table \ref{tab2}, we report the performance of the AFPC-BB
algorithm with continuation technique on randomly generated Gram
matrix completion problems. We use PROPACK to compute partial
eigenvalues and eigenvectors. We set the regularization parameter in
problem (\ref{lagrangian}) to be $\bar{\mu}=10^{-4}\|\A^*b\|$ and
$\mu_1=1/4\|\A^*b\|$. The update strategy for $\mu_k$ is
$\max(1/4\mu_{k-1},\bar{\mu})$ whenever the stopping criterion is
satisfied with $\epsilon= 10^{-3}$.

\begin{table*}[!htb]
\begin{center}
\begin{tabular}{cccc|ccc}
\hline   \multicolumn{4}{c|}{Problems}& \multicolumn{3}{c}{Results}  \\
\hline   n  &  r  &  p  &  FR  &$\#$~iter&     time  &  error  \\
\hline
\hline  100 & 10  & 579 &1.6494&  76     & 1.48e+0  & 9.64e-4 \\
\hline  500 & 10  & 3309&1.4974&  80     & 2.35e+1  & 9.90e-4 \\
\hline  1000& 10  &10621&0.9372&  165    & 1.41e+2  & 9.95e-4 \\
\hline  1000& 50  &10621&4.5923&  120    & 1.10e+2  & 9.89e-4 \\
\hline  1500& 10  &25573&0.5848&  271    & 6.04e+2  & 9.96e-4 \\
\hline  1500& 50  &25573&2.8849&  156    & 4.59e+2  & 9.83e-4 \\
\hline
\end{tabular}
\end{center}
\caption{\label{tab2} Numerical results for AFPC-BB on random Gram
matrix completion problems.}
\end{table*}

As indicated in the table, it takes  the AFPC-BB algorithm  fewer
than 300 iterations on the average  and  less than 15 minutes to
solve all  problems in our experiments.   In addition, for most of
these problems, $FR$ is larger than $1$.  Especially, $FR$ is up to
$4.5923$ for the problem with $n=1000, r=50$.   To our best
knowledge, nobody has considered solving matrix completion problems
in this situation yet.  It is rather surprising that the original
random Gram matrix with low rank  can be recovered given only such a
small number of affine constraints.

\subsection{Exact rational sum of squares certificates}

The numerical Gram matrix $W$ returned by the AFPC-BB algorithm
satisfies
\begin{equation}\label{appSOS}
  f(x)\approx \mon^T\cdot W\cdot \mon,\quad W \succeq 0.
\end{equation}
In order to derive an exact SOS decomposition of $f$,  we need to
start with an approximate Gram matrix with high accuracy. Although
first-order methods are often the only practical option for
large-scale problems, it has also been observed   that the sequence
$\{X^k\}$ computed by the AFPC-BB algorithm converges quite slowly
to an optimal solution $W^*$.  Therefore, we apply the
structure-preserving Gauss-Newton  iterations (see
\cite{KLYZ08,{KLYZ09}}) to refine the Gram matrix $W$ with low rank
returned by the AFPC-BB algorithm:  we choose a rank $r$ which is
less than or equal to the rank of $W$ and compute the truncated
L${}^{\text{T}}$DL decomposition of $W$ to
 obtain an  approximate SOS decomposition 
\begin{equation*}
f(x) \approx \sum_{i=1}^{r} (\sum_{\alpha} c_{i,\alpha}x
^{\alpha})^2,
\end{equation*}
then  apply standard Gauss-Newton  iteration to compute $\Delta
c_{i,\alpha}x^{\alpha}$ such that
\begin{equation}\label{GN}
f(x)= \sum_{i=1}^{r} (\sum_{\alpha } c_{i,\alpha}x^{\alpha} +\Delta
c_{i,\alpha}x^{\alpha})^2+ O(\sum_{i=1}^r (\sum_\alpha \Delta
c_{i,\alpha} x^\alpha)^2).
\end{equation}
 The matrix $W$ is updated  accordingly to $W+\Delta W$ and the
iteration is stopped when the backward error
\begin{gather}\label{poly-err}
   \theta=\|f(x)-\mon^T \cdot W \cdot \mon\|_2
\end{gather}
is less than the given tolerance $\epsilon$.   If $\theta$ remains
greater than $\epsilon$ after several Gauss-Newton  iterations, we
may increase the precision or use different $r$ and try Gauss-Newton
iterations again. After converting the refined matrix $W$ into a
rational matrix, we use the orthogonal projection technique in
\cite{KLYZ08,{KLYZ09}} to construct an exact rational SOS
decomposition for the nonnegative polynomial $f$.



It is interesting to notice that the AFPC-BB algorithm provides a
low-rank Gram matrix to seed  Gauss-Newton iterations while most of
the SDP solvers GloptiPoly~\cite{gloptipoly},
SOSTOOLS~\cite{sostools}, YALMIP \cite{YALMIP},
SeDuMi~\cite{Sturm99}, SDPT3 \cite{Toh98sdpt3} and
SparsePOP~\cite{SparsePOP} usually return a Gram matrix with
 maximum rank (see \cite[Theorem 2.1]{KRT97}). For example,
  we consider a randomly generated Gram matrix completion
problem with $n=200$, $r=5$, which is created in  the same way
described at the beginning of Section 5. The smallest
 $10$ singular values of the numerical Gram matrix computed by  SeDuMi   are
 \[ 3.527, 2.779, 2.445, 1.369, 1.184, 0.964,
0.627, 0.101, 0.485,
   0.161, 0.069.
\]
However, the rank of the numerical Gram matrix returned by the
AFPC-BB algorithm is $14$. We notice that by applying Gauss-Newton
iterations to the low-rank Gram matrix computed by AFPC-BB, it is
usually much easy to recover an exact SOS decomposition of the
nonnegative polynomial.

In \cite{Ma10}, we have used the MFPC-BB algorithm  to  successfully
 recover the exact sums of squares of nonnegative polynomials  in
\cite{KLYZ09}. 
%
%

In the following two tables, we compare the performance of the
AFPC-BB algorithm and the SDP solver SeDuMi  for recovering low rank
Gram matrices from affine constraints on the same randomly generated
examples.  We also show the effectiveness of Gauss-Newton iterations
run in Maple with $Digits=14$  in refining the numerical Gram
matrix. These tables report the number of affine constraints $p$,
the degree of freedom ratio $FR$, the backward error $\theta$, the
rank of the Gram matrix and the running time in seconds. Table
\ref{tab:upper2} also shows the smallest singular value $\sigma_n$
of numerical Gram matrices returned by SeDuMi. We set
$\epsilon=5\times10^{-4}$ in the AFPC-BB algorithm, which is small
enough to guarantee very good recoverability.

\begin{table*}[!htb]
\begin{center}
\begin{tabular}{cccc|ccc|ccc}
\hline   \multicolumn{4}{c|}{Examples} & \multicolumn{3}{c|}{AFPC-BB} &\multicolumn{3}{c}{Gauss-Newton iteration} \\
\hline  n  & r  & p    &  FR  & rank  &$\theta $& time   & rank  &$\theta $& time \\
\hline
\hline  50 & 5  & 255  &0.9412&   9   &  6.874  & 4.06e-1  &  5    & 1.443e-5 & 4.08e+0  \\
\hline  100& 5  & 579  &0.8463&   9   &  0.860  & 1.75e+0  &  5    & 1.935e-9 & 2.98e+1 \\
\hline  150& 5  & 896  &0.8259&   13  &  2.758  & 7.09e+0  &  5    & 4.023e-8 & 6.28e+1 \\
\hline  200& 5  & 1221 &0.8108&   14  &  3.629  & 1.07e+1  &  5    & 4.030e-5 & 4.69e+2 \\
\hline  300& 5  & 1932 &0.7712&   14  &  22.315 & 2.32e+1  &  5    & 1.379e-9 & 5.61e+2   \\
\hline  400& 5  & 2610 &0.7624&   15  &  12.515 & 6.23e+1  &  5    & 5.825e-5 & 1.22e+3   \\
\hline  500& 5  & 5124 &0.4859&   17  &  24.829 & 5.33e+1  &  5    & 1.479e-5 & 7.92e+3   \\
\hline
\end{tabular}
\end{center}
\caption{\label{tab:upper1} Exact SOS certificates via AFPC-BB and
Gauss-Newton iterations.}
\end{table*}

\begin{table*}[!htb]
\begin{center}
\begin{tabular}{cccc|cc|ccc}
\hline   \multicolumn{4}{c|}{Examples} & \multicolumn{2}{c|}{SDP} &\multicolumn{3}{c}{Gauss-Newton iteration} \\
\hline  n  & r  & p    &  FR   &$\sigma_n $  & time   & rank  &$\theta $& time \\
\hline
\hline  50 & 5  & 255  & 0.9412&  0.701 & 1.03e+0 &  6    & 3.769e-8&  1.59e+1  \\
\hline  100& 5  & 579  & 0.8463&  0.042 & 7.77e+0 &  7    & 2.438e-10& 6.88e+1 \\
\hline  150& 5  & 896  & 0.8259&  0.069 & 1.24e+1 &  7    & 1.883e-10& 2.23e+2 \\
\hline  200& 5  & 1221 & 0.8108&  0.069 & 6.58e+1 &  7    & 4.666e-9 & 8.21e+2 \\
\hline  300& 5  & 1932 & 0.7712&  0.442 & 2.84e+2 &  7    & 5.679e-10& 1.30e+3 \\
\hline  400& 5  & 2610 & 0.7712&  0.114 & 3.94e+2 &  8    & 9.249e-10& 5.00e+3  \\
\hline  500& 5  & 5124 & 0.4859&  0.001 & 2.14e+3 &  ---  &   ---    & ---  \\
\hline
\end{tabular}
\end{center}
\caption{\label{tab:upper2} Approximate SOS certificates via SDP and
Gauss-Newton iterations.}
\end{table*}

As indicated in Table \ref{tab:upper1}, using the AFPC-BB algorithm,
 we can compute numerical low-rank Gram
matrices  very efficiently. Moreover,  for each example, we can use
Gauss-Newton iterations (\ref{GN}) to refine the Gram matrix
returned by AFPC-BB to relatively high accuracy, e.g. $10^{-5}$.
By rounding every entry of the refined matrix to the nearest
integer, we can easily  recover a rational Gram matrix with rank $5$
which gives the exact SOS representation of the nonnegative
polynomial.

As indicated in Table \ref{tab:upper2}, for the same  examples,
numerical Gram matrices  returned by SeDuMi have full rank  for the
given tolerance $10^{-3}$,  while the rank of matrices returned by
AFPC-BB are relatively small. We seed the numerical Gram matrices
returned by SeDuMi to Gauss-Newton iterations,  the ranks of the
refined matrices are always larger than $5$ in order to guarantee
the convergence of the Gauss-Newton iterations. Furthermore, we are
not yet able to recover exact SOS decompositions even though
backward errors $\theta$ have been reduced to the order of
$10^{-10}$.

From Table \ref{tab5}, it is also interesting to notice, if we
decrease the degree of freedom ratio $FR$ by choosing a sparse
monomial vector $m_d(x)$, it is possible to recover the exact SOS
representation of the nonnegative polynomial from  the numerical
low-rank Gram matrix returned by the AFPC-BB algorithm, without
running Gauss-Newton iterations.

\begin{table*}[!htb]
\begin{center}
\begin{tabular}{cccc|ccc|c}
\hline   \multicolumn{4}{c|}{Problems}& \multicolumn{3}{c|}{AFPC-BB} & \multicolumn{1}{c}{Rational SOS} \\
\hline   n  &  r  &  p  &  FR  &$\#$~iter&     time  &  error       &  time\\
\hline
\hline  50  &  5  & 608 &0.3947&  45     & 4.38e-1  & 5.84e-4       &  1.09e-1\\
\hline  100 &  5  & 1167&0.4199&  100    & 1.97e+0  & 8.72e-4       &  3.59e-1\\
\hline  150 &  5  & 1703&0.4345&  217    & 5.91e+0  & 9.96e-4       &  8.12e-1\\
\hline  200 &  5  & 2249&0.4402&  239    & 9.11e+0  & 9.99e-4       &  1.50e+0\\
\hline  300 &  5  & 3544&0.4204&  327    & 2.11e+1  & 9.77e-4       &  3.45e+0\\
\hline  400 & 10  &10078&0.3924&  151    & 2.46e+1  & 9.52e-4       &  1.14e+1\\
\hline  500 & 20  &24240&0.4047&  142    & 4.48e+1  & 4.70e-4       &  4.65e+1\\
\hline  1000& 10  &27101&0.3673&  436    & 3.70e+2  & 4.97e-4       &  1.38e+2\\
\hline  1000& 50  &95367&0.5114&  395    & 6.56e+2  & 9.99e-5       &  1.41e+3\\
\hline  1500& 10  &45599&0.3280&  554    & 1.00e+3  & 4.99e-4       &  3.10e+2\\
\hline
\end{tabular}
\end{center}
\caption{\label{tab5}\small  Exact SOS certificates via AFPC-BB.}
\end{table*}

%
%
%
%



\def\refname{\Large\bfseries References}
\bibliographystyle{plain}
\bibliography{yma}

\end{document}